\def\bc{\begin{center}}
\def\ec{\end{center}}
\def\be{\begin{equation}}
\def\ee{\end{equation}}
\def\F{\mathcal F}
\def\N{\mathbb N}
\def\H{\mathcal H}
\def\R{\mathbb R}
\newcommand\hdim{\dim_{\mathrm H}}
\newtheorem{lem}{Lemma}[section]
\newtheorem{dfn}[lem]{Definition}
\newtheorem{pro}[lem]{Proposition}
\newtheorem{thm}[lem]{Theorem}
\newtheorem{rem}{Remark}
\numberwithin{equation}{section}
\begin{document}
\title[ Shrinking target problem for beta dynamical systems ]{Higher dimensional shrinking target problem for beta dynamical systems}


 \author[M. Hussain]{Mumtaz Hussain}
\address{Mumtaz Hussain,  La Trobe University, POBox199, Bendigo 3552, Australia. }
\email{m.hussain@latrobe.edu.au}


\author[W. Wang]{Weiliang Wang}
\address{Weiliang Wang, Department of Mathematics, West Anhui University, Liu'an, Anhui 237012, China}
\email{weiliang\_wang@hust.edu.cn}

\keywords{Beta-expansions, shrinking target problem, Hausdorff dimension}
\subjclass[2010]{Primary 11K55; Secondary 28A80, 11J83, 11K60, 37C45, 37A45}
\maketitle
\begin{abstract}
We consider the two dimensional shrinking target problem in the beta dynamical system  for general $\beta>1$ and with the general error of approximations.  Let $f, g$ be two positive continuous functions.
For any $x_0,y_0\in[0,1]$, define the shrinking target set
 $$
E(T_\beta, f,g):=\left\{(x,y)\in [0,1]^2:  \begin{array}{ll} |T_{\beta}^{n}x-x_{0}|<e^{-S_nf(x)}\\ [1ex] |T_{\beta}^{n}y-y_{0}|< e^{-S_ng(y)}     \end{array} \ {\text{for infinitely many}} \ n\in \N
\right\},
$$
where $S_nf(x)=\sum_{j=0}^{n-1}f(T_\beta^jx)$ is the Birkhoff sum. We calculate the Hausdorff dimension of this set and prove that it is the solution to some pressure function. This represents the first result of this kind for the higher dimensional beta dynamical systems.
\end{abstract}

\section{introduction}

The study of the Diophantine properties of the distribution of orbits for a measure preserving dynamical system has received much attention recently. Let $T:X\to X$ be a measure preserving transformation of the system $(X,\mathcal{B},\mu)$  with a consistent metric $d$.   If the transformation $T$ is ergodic with respect to the measure $\mu$, Poincare's recurrence theorem implies that, for almost every $x\in X$, the orbit $\{T^nx\}_{n=0}^\infty$ returns to an arbitrary but fixed neighbourhood of $x$ infinitely often. That is,  for any $x_0\in X$, for $\mu$-almost
all $x\in X,$ 
$$\liminf\limits_{n\rightarrow\infty} d(T^nx,x_0)=0.$$
Poincare's recurrence theorem is qualitative in nature but it does motivate the study of the distribution of  $T$-orbits of points in $X$ quantitatively. In other words, a natural motivation is to investigate \emph{how fast the above liminf tends to zero?} To this end, the spotlight is on the size of the set
$$D(T, \varphi):=\{x\in X: d(T^n x, x_0)<\varphi(n)~~\text{for infinitely many}~n\in \mathbb{N}\},$$
where $\varphi:\N\rightarrow \R_{\geq 0}$ is a positive function such that $\varphi(n)\rightarrow 0$ as $n\rightarrow \infty.$   The set $D(T,\varphi)$ can be viewed as the collection of points in $X$ whose $T$-orbit hits a shrinking target infinitely many times. The set $D(T,\varphi)$ is the dynamical analogue of the classical inhomogeneous well-approximable set
$$W(\varphi):=\{x\in [0, 1): |x-p/q-x_0|<\varphi(q)~~\text{for infinitely many}~p/q\in \mathbb Q\}.$$
As one would expect the `size' of both of these sets depend upon the nature of the function $\varphi$, that is, how fast it is approaching to zero. The size of the set $W(\varphi)$ in terms of Lebesgue measure or Hausdorff measure and dimension has been established even in the higher dimensional (linear form) settings, see \cite{BRV, WWX, HussainSimmons2} for further details.   In contrast,   not much is known for the higher dimensional version of the set $D(T, \varphi)$ { for general $T$}.   

\medskip

Following the work of Hill and Velani \cite{HillVelani1}, the Hausdorff dimension of the set $D(T, \varphi)$ has been  determined for many dynamical systems, from the system of rational expanding maps on their Julia sets to conformal iterated function systems \cite{Urbanski11}. We refer the reader to \cite{CHW} for a comprehensive discussion regarding the Hausdorff dimension of various dynamical systems.  In this paper, we confine {ourselves} to the two dimensional shrinking target problem in the beta dynamical system with a general error of approximation.


For a real number $\beta>1$, define the transformation $T_\beta:[0,1]\to[0,1]$ by $$T_\beta: x\mapsto \beta x\bmod 1.$$ This map generates the $\beta$-dynamical system $([0,1], T_\beta)$. It is well known that $\beta$-expansion is a typical example of an expanding non-finite Markov system whose properties are reflected by the orbit of some critical point, in other words, it is not a subshift of finite type with mixing properties. This causes difficulties in studying the metrical questions related to $\beta$-expansions.  General $\beta$-expansions have been widely studied in the literature, see for instance \cite{HLSW, TanWang, SeuretWang, LB, HussainWeiliang} and references therein. In particular, the Hausdorff dimension, { denoted throughout as $\dim_H$,  of  $D(T_\beta, \varphi)$ was obtained in \cite{LB}  and the Lebesgue measure and Hausdorff dimension of the set $$
D(T_{\beta}, \varphi_1,\varphi_2):=\left\{(x,y)\in [0,1]^2:  \begin{array}{ll} |T_{\beta}^{n}x-x_{0}|<\varphi_1(n)\\ [1ex] |T_{\beta}^{n}y-y_{0}|< \varphi_2(n)    \end{array} \ {\text{for infinitely many}} \ n\in \N
\right\},
$$
 was calculated in \cite{HussainWeiliang}. Here $x_0, y_0\in [0, 1]$ are fixed and the approximating functions $\varphi_1, \varphi_2$  are positive functions of $n$. 
\medskip

In 2014, Yann Bugeaud and Baowei Wang \cite{BugeaudWang} calculated the Hausdorff dimension of the set with the error of approximation given by the ergodic sum, i.e.  $$
E(T_\beta, h):=\left\{x\in [0,1]:  \begin{array}{ll} |T_{\beta}^{n}x-x_{0}|<e^{-S_nh(x)}  \end{array} \ {\text{for infinitely many}} \ n\in \N
\right\},
$$
where $h$ is a positive  continuous function on $[0,1]$ and $S_nh(x)=h(x)+\cdots+h(T_{\beta}^{n-1}x).$ {  Clearly the error of approximation is exponential depending upon the orbits $T_\beta x$.  Note that it is still an open problem whether $e^{-S_nh(x)}$ implies the arbitrary function $\varphi(n)$ or not. However, $e^{-S_nf(x)}$ reduces to  $\beta^{-n\tau}$ by considering $h(x)=\tau \log|T'(x)|$ for some $\tau>0$. Thus, it implies the  Jarn\'ik-Besicovitch type result for the set under consideration.}

\medskip

In this paper, we extend  Bugeaud and Wang's  set $E(T_\beta, h)$ to the two dimensional setting and calculate its Hausdorff dimension.}
Let $f,g$ be two positive  continuous function on $[0,1]$ and let $x_0, y_0\in [0, 1]$ be fixed.  Define
 $$
E(T_\beta, f,g):=\left\{(x,y)\in [0,1]^2:  \begin{array}{ll} |T_{\beta}^{n}x-x_{0}|<e^{-S_nf(x)}\\ [1ex] |T_{\beta}^{n}y-y_{0}|< e^{-S_ng(y)}     \end{array} \ {\text{for infinitely many}} \ n\in \N
\right\}.
$$
%

 The set $E(T_\beta, f,g)$ is the set of all points $(x, y)$ in the unit square such that the pair {$\{(T^{n}x, T^{n} y)\}$ is in the shrinking rectangle $B\left ( x_0,e^{-S_nf(x)} \right)\times B\left ( y_0,e^{-S_ng(y)} \right)$ for infinitely many $n$}. The rectangle  shrink to zero at exponential rates given by $e^{-S_nf(x)}$ and  {$e^{-S_ng(y)}$}. We shall prove the following result.

%
%
%
%
%
\begin{thm}\label{t2} Let $f,g$ be two continuous functions on $[0,1]$ with $f(x)\geq g(y)$ for all $x,y\in [0,1]$. Then
$$\hdim E(T_\beta, f,g)=\min\{s_1,s_2\},$$
where
\begin{align*}s_1&=\inf\{s\geq 0: P(f-s(\log\beta+f))+P(-g)\leq 0\}, \\ s_2&=\inf\{s\geq 0: P(-s(\log\beta+g))+\log\beta\leq 0\}.\end{align*}
\end{thm}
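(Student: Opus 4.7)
The plan is to bound $\hdim E(T_\beta,f,g)$ above and below by $\min\{s_1,s_2\}$, with the upper bound coming from an efficient cover by products of $\beta$-cylinders and the lower bound from a Cantor-like subset equipped with a suitable equilibrium measure. Write $E(T_\beta,f,g)=\bigcap_{N}\bigcup_{n\geq N}E_n$, where $E_n$ is the set of pairs satisfying the two inequalities at index $n$. I will cover $E_n$ by products $I_n(w)\times J_n(w')$ of full admissible $n$-th level $\beta$-cylinders; the shrinking target condition carves out from each factor an interval of length comparable to $\beta^{-n}e^{-S_nf}$ (resp.\ $\beta^{-n}e^{-S_ng}$), since $T_\beta^n$ has slope $\beta^n$ on full cylinders and $S_nf$, $S_ng$ have bounded distortion there. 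Because $f\geq g$ the resulting boxes are longer in the $y$-direction, so I take the better of two natural covers: (i) subdivide each box into $\asymp e^{S_n(f-g)}$ squares of side $\beta^{-n}e^{-S_nf}$, for which the Hausdorff $s$-sum factors as
\[
\Bigl(\sum_{w}e^{S_n(f-s\log\beta-sf)(x_w)}\Bigr)\Bigl(\sum_{w'}e^{-S_ng(y_{w'})}\Bigr)\asymp e^{n(P(f-s(\log\beta+f))+P(-g))},
\]
summable when $s>s_1$; or (ii) cover each box by a single ball of diameter $\beta^{-n}e^{-S_ng}$, producing an $s$-sum comparable to $e^{n(\log\beta+P(-s(\log\beta+g)))}$, summable when $s>s_2$. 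Taking the better cover at each $s$ gives $\hdim E(T_\beta,f,g)\leq\min\{s_1,s_2\}$.

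For the lower bound, fix $s<\min\{s_1,s_2\}$ and construct a Cantor subset $F_s\subset E(T_\beta,f,g)$ together with a probability measure $\mu$ on $F_s$ satisfying $\mu(B(z,r))\lesssim r^s$; the mass distribution principle then yields $\hdim F_s\geq s$. Along a rapidly growing sequence $(n_k)$, at stage $k$ retain only those $(w,w')$ of full $n_k$-cylinders whose box meets $E_{n_k}$, which still leaves a positive proportion of them by invariance of the underlying equilibrium measures. The measure $\mu$ is built as a Bernoulli-like concatenation of a product of two Gibbs/equilibrium states: in the regime $s_1\leq s_2$ the $x$-factor is an equilibrium state for $f-s(\log\beta+f)$ and the $y$-factor is one for $-g$; in the regime $s_2\leq s_1$ the $x$-factor is the Parry measure and the $y$-factor is an equilibrium state for $-s(\log\beta+g)$. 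In each case the pressure identities defining $s_1$ or $s_2$ translate into the correct local dimension of $\mu$ at the natural cylinder scales $\beta^{-n_k}e^{-S_{n_k}f}$ and $\beta^{-n_k}e^{-S_{n_k}g}$; interpolation to balls of arbitrary radius is the standard anisotropic-rectangle argument, and letting $s\nearrow\min\{s_1,s_2\}$ completes the proof.

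The main obstacles are threefold. First, the anisotropy of the boxes ($x$-side $\leq y$-side, forced by $f\geq g$) produces the two-regime formula $\min\{s_1,s_2\}$ and requires switching between the two covers, and correspondingly the two measure constructions, on either side of the crossover $s_1=s_2$. Second, the dependence of the error radii on the points $x,y$ themselves (rather than on the base point $(x_0,y_0)$) means these radii vary along a cylinder; this is absorbed via bounded distortion of the potentials under $T_\beta$, so that $S_nf$ and $S_ng$ become essentially constant on $n$-cylinders up to a manageable additive error. Third, the well-known absence of a finite Markov partition for $T_\beta$ at a general $\beta>1$ forces restriction to full cylinders (of count $\asymp\beta^n$) and approximation by the subshifts of finite type coming from $\beta_N\uparrow\beta$; verifying that the Gibbs/equilibrium measures on the $\beta_N$-systems converge in a manner compatible with the pressure identities defining $s_1,s_2$ will be the most delicate step, and is where the specifically \emph{beta-dynamical} features of the argument are concentrated.
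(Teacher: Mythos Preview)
Your upper bound matches the paper's exactly: cover each rectangle $J_n(U)\times J_n(W)$ either by $\asymp e^{S_n(f-g)}$ small squares or by one large ball, and show the resulting $s$-sums are governed by the two pressure expressions defining $s_1,s_2$. Your case split $s_1\le s_2$ versus $s_2\le s_1$ is equivalent to the paper's split $s_0>1$ versus $s_0\le 1$ (they prove these coincide), so the overall architecture of the lower bound---Cantor subset, mass distribution principle, two regimes, approximation by the sofic subsystems $\beta_N\uparrow\beta$---is the same as the paper's.

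Two points in your sketch need correction or sharpening. First, your appeal to ``bounded distortion of the potentials under $T_\beta$'' is not available: $f,g$ are only assumed continuous, not H\"older, so $S_nf$ need not be constant on $n$-cylinders up to $O(1)$. The paper handles this by the softer estimate $|S_nf(x)-S_nf(x^*)|<n\delta$ for $n$ large and passes to the auxiliary sets $\overline{E}(T_\beta,f\pm\delta,g\pm\delta)$; you should do the same. Second, the paper does not use genuine equilibrium/Gibbs states. Instead it builds the Cantor set by alternating blocks of ``free'' words $U_i,W_i\in\Sigma_{\beta_N}^{m_i}$ with explicitly chosen full ``target'' words $K_i,L_i$ (produced by a packing lemma) that force $T_\beta^{n_i}x$ near $x_0$ and $T_\beta^{n_i}y$ near $y_0$; the mass on each level is defined as the solution $s_i$ of a finite combinatorial sum $\sum_{U,W}\ldots=1$ rather than via an invariant measure. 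Your phrase ``retain only those $(w,w')$ whose box meets $E_{n_k}$, which still leaves a positive proportion'' does not by itself give a set contained in $E(T_\beta,f,g)$, nor a measure with the needed H\"older exponent at \emph{all} intermediate scales: the paper spends most of its effort on the scales $n_{i-1}+k_{i-1}\le n\le n_i+k_i$ (its Steps 1--3), where the anisotropy and the free/target block structure interact. Your equilibrium-state idea can be made to work on the $\beta_N$ subsystems, but you will still need to carry out this scale-by-scale H\"older estimate explicitly.
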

Here the notation  $P(\cdot)$ stands for the pressure function for the $\beta$-dynamical system associated to continuous potentials $f$ and $g$.  To keep the introductory section short, we formally give the definition of pressure function in section \ref{pre}. The reason that the Hausdorff dimension is in terms of the pressure function is because of the dynamical nature of the set $E(T_\beta, f, g)$. For the detailed analysis of the properties of the pressure function, ergodic sums for general dynamical systems we refer the reader to Chapter 9 of the book \cite{Walterbook}.

\medskip

The proof of this theorem splits into two parts: establishing the upper bound and then the lower bound. The upper bound is relatively easier to prove by using the definition of Hausdorff dimension on the natural cover of the set. However, establishing the lower bound is challenging and the main substance of this paper.  Actually, the main obstacle in determining the metrical properties of general $\beta$-expansions
lies in the difficulty of estimating the length of a general {cylinder} and, since we are dealing with two dimensional settings, as a consequence area of the cross product of general cylinders. As far as the Hausdorff dimension is concerned, one does not need to take all points into consideration; instead, one may choose a
subset of points with regular properties to approximate the set in question. This argument, in turn {requires}
some continuity of the dimensional number, when the system is approximated by its subsystem.

The paper is organised as follows. Section  \ref{pre} is devoted to recalling some elementary properties of $\beta$-expansions. Short proofs are also given when we could not find any reference. Definitions and some properties of the pressure function are stated in this section as well. In section \ref{upperbound}, we prove the upper bound of the Theorem \ref{t2}. In section \ref{lowerbound}, we prove the lower bound of Theorem \ref{t2} and since this carries the main weightage we subdivide this section into several subsections.


\section{Preliminaries}\label{pre}
We begin with a brief account on some basic properties of $\beta$-expansions and fixing some notation. We then state and prove two propositions which will give the covering and packing properties.

The $\beta$-expansion of real numbers was first introduced by R\'{e}nyi \cite{Renyi}, which is given by the following algorithm. For any $\beta>1$, let
\begin{equation}\label{e1}
T_{\beta}(0):=0,~~ T_{\beta}(x)=\beta x-\lfloor\beta x\rfloor, x\in[0,1),
\end{equation}
where $\lfloor\xi\rfloor$ is the integer part of $\xi\in \mathbb{R}$. By taking
$$\epsilon_{n}(x,\beta)=\lfloor\beta T_{\beta}^{n-1}x\rfloor\in \mathbb{N}$$
recursively for each $n\geq 1,$ every $x\in[0,1)$ can be uniquely expanded into a finite or an infinite sequence
\begin{equation*}\label{e2}
x=\frac{\epsilon_{1}(x,\beta)}{\beta}+\frac{\epsilon_{2}(x,\beta)}{\beta^2}+\cdots+\frac{\epsilon_{n}(x,\beta)}{\beta^n}+
\frac{T_{\beta}^n x}{\beta^n},
\end{equation*}
which is called the $\beta$-expansion of $x$ and the sequence  $\{\epsilon_{n}(x,\beta)\}_{n\geq1}$ is called the digit sequence of $x.$ We also write the $\beta$-expansion of $x$ as $$\epsilon(x,\beta)=\big(\epsilon_{1}(x,\beta),\cdots,\epsilon_{n}(x,\beta),\cdots\big).$$ The system $([0,1],T_{\beta})$ is called the \emph{$\beta$-dynamical system} or just the \emph{$\beta$-system.}

\begin{dfn}
A finite or an infinite sequence $(w_{1},w_{2},\cdots)$ is said to be admissible (with respect to the base $\beta$), if there exists an $x\in[0,1)$ such that the digit sequence  of $x$ equals $(w_{1},w_{2},\cdots).$
\end{dfn}

Denote by $\Sigma_{\beta}^n$ the collection of all admissible sequences of length $n$ and by $\Sigma_{\beta}$ that of all infinite admissible sequences.

Let us now turn to the infinite $\beta$-expansion of $1$, which plays an important role in the study of $\beta$-expansion. Applying algorithm $(\ref{e1})$ to the number $x=1$, then the number $1$ can be expanded into a series, denoted by
$$1=\frac{\epsilon_{1}(1,\beta)}{\beta}+\frac{\epsilon_{2}(1,\beta)}{\beta^2}+\cdots+\frac{\epsilon_{n}(1,\beta)}{\beta^n}+
\cdots.$$

If the above series is finite, i.e. there exists $m\geq1$ such that $\epsilon_{m}(1,\beta)\neq 0$ but $\epsilon_{n}(1,\beta)=0$ for $n>m$, then $\beta$ is called a simple Parry number. In this case,  we write $$\epsilon^*(1,\beta):=(\epsilon_{1}^{*}(\beta),\epsilon_{2}^{*}(\beta),\cdots)=(\epsilon_{1}(1,\beta),\cdots,\epsilon_{m-1}(1,\beta),
\epsilon_{m}(1,\beta)-1)^{\infty},$$
where $(w)^\infty$ denotes the periodic sequence $(w,w,w,\cdots).$ If $\beta$ is not a simple Parry number, we write
$$\epsilon^*(1,\beta):=(\epsilon_{1}^{*}(\beta),\epsilon_{2}^{*}(\beta),\cdots)=(\epsilon_{1}(1,\beta),\epsilon_{2}(1,\beta),
\cdots).$$

In both cases, the sequence $(\epsilon_{1}^{*}(\beta),\epsilon_{2}^{*}(\beta),\cdots)$ is called the infinite $\beta$-expansion of $1$ and we always have that
\begin{equation*}\label{e3}
1=\frac{\epsilon_{1}^*(\beta)}{\beta}+\frac{\epsilon_{2}^*(\beta)}{\beta^2}+\cdots+\frac{\epsilon_{n}^*(\beta)}{\beta^n}+
\cdots.
\end{equation*}

The lexicographical order $\prec$ between the infinite sequences is defined as follows:
$$w=(w_{1},w_{2},\cdots,w_{n},\cdots)\prec w'=(w_{1}',w_{2}',\cdots,w_{n}',\cdots)$$
if there exists $k\geq1$ such that $w_{j}=w_{j}'$ for $1\leq j<k$, while $w_{k}<w_{k}'.$ The notation $w\preceq w'$ means that $w\prec w'$ or $w=w'.$ This ordering can be extended to finite blocks by identifying a finite block $(w_{1},w_2,\cdots,w_n)$ with the sequence $(w_{1},w_2,\cdots,w_n,0,0,\cdots)$.

The following result due to Parry \cite{Parry} is a criterion for the admissibility of a sequence.
\begin{lem}[Parry \cite{Parry}]\label{le1} Let $\beta>1$ be a real number. Then a non-negative integer sequence $\epsilon=(\epsilon_1,\epsilon_2,\cdots)$ is admissible if and only if, for any $k\geq 1$,
    $$(\epsilon_k,\epsilon_{k+1},\cdots)\prec(\epsilon_1^*(\beta),\epsilon_2^*(\beta),\cdots).$$
\end{lem}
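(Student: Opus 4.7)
The plan is to establish both implications by translating the lexicographic condition on digit sequences into real-valued inequalities involving the series $\sum_n\epsilon_n/\beta^n$, exploiting the correspondence under which shifting the sequence corresponds to the action of $T_\beta$ on $[0,1)$.

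For necessity, I would first observe that if $(\epsilon_n)_{n\geq 1}$ is the greedy $\beta$-expansion of some $x\in[0,1)$, then by the defining recursion $T_\beta^{k-1}x\in[0,1)$ and its digit sequence is precisely $(\epsilon_k,\epsilon_{k+1},\ldots)$. Thus it suffices to prove the case $k=1$: that the digit sequence $(\eta_n)$ of every $y\in[0,1)$ is strictly lex-smaller than $\epsilon^*(1,\beta)$. I would argue this by contraposition. Assuming $(\eta_1,\eta_2,\ldots)\succeq \epsilon^*(1,\beta)$, I would combine the identity $\sum_{n\geq 1}\epsilon^*_n(\beta)/\beta^n=1$ with a term-by-term comparison of the two series to force $y\geq 1$, contradicting $y<1$.

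For sufficiency, given $(\epsilon_n)$ with every tail strictly lex-smaller than $\epsilon^*(1,\beta)$, I would set
\[
x:=\sum_{n\geq 1}\frac{\epsilon_n}{\beta^n},
\]
and verify (a) $x\in[0,1)$, and (b) the greedy algorithm applied to $x$ recovers $(\epsilon_n)$. The lex condition at $k=1$ together with the normalisation $\sum_n\epsilon^*_n(\beta)/\beta^n=1$ yields $x<1$, and non-negativity of the digits gives $x\geq 0$. For (b) I would induct on $n$: assuming the first $n-1$ greedy digits of $x$ are $\epsilon_1,\ldots,\epsilon_{n-1}$, a direct computation gives $\beta T_\beta^{n-1}x=\epsilon_n+\sum_{j\geq 1}\epsilon_{n+j}/\beta^j$; the lex condition applied at index $n+1$ then bounds the tail strictly by $1$, so $\lfloor\beta T_\beta^{n-1}x\rfloor=\epsilon_n$, which closes the induction and simultaneously identifies $T_\beta^n x$ with the shifted series, exactly as required.

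The main technical obstacle is the simple-Parry case. There $\epsilon^*(1,\beta)$ is the periodic modification $(\epsilon_1(1,\beta),\ldots,\epsilon_{m-1}(1,\beta),\epsilon_m(1,\beta)-1)^{\infty}$ rather than the terminating $\beta$-expansion of $1$, and one must first verify by a geometric-series calculation that this periodic sequence still sums to $1$, and then carry the strict-lex comparison through without losing strictness when a candidate sequence coincides with $\epsilon^*(1,\beta)$ on a long or even infinite block. A subsidiary point, needed for the tail estimates in both directions, is the a priori digit bound $\epsilon_n\leq\lfloor\beta\rfloor$, which itself follows from the lex condition at position $n$ by comparison with $\epsilon^*_1(\beta)=\lfloor\beta\rfloor$ and is what prevents the ambient series from overshooting $1$.
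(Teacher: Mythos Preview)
The paper does not prove this lemma at all: it is stated as a classical result of Parry and simply cited to \cite{Parry}, so there is no ``paper's own proof'' to compare against. Your outline follows the standard route found in Parry's original paper and in most subsequent expositions, and the overall structure (necessity via the shift, sufficiency by reconstructing $x$ and running the greedy algorithm inductively) is correct.

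One point deserves more care than you give it. In the sufficiency direction you write that ``the lex condition applied at index $n+1$ then bounds the tail strictly by $1$'', but the single inequality $(\epsilon_{n+1},\epsilon_{n+2},\ldots)\prec\epsilon^*(1,\beta)$ does \emph{not} by itself imply $\sum_{j\ge1}\epsilon_{n+j}\beta^{-j}<1$: lexicographic order on sequences is not the same as numerical order of the associated series. What makes the inequality go through is the full hypothesis that \emph{every} tail is lex-smaller than $\epsilon^*(1,\beta)$; one then argues, e.g.\ by considering $M:=\sup_{k}\sum_{j\ge1}\epsilon_{k+j-1}\beta^{-j}$ and using the first disagreement index to get a self-referential bound $M\le 1-\beta^{-k}+\beta^{-k}M$, hence $M\le 1$, with strictness following from the strict lex inequality. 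The same subtlety appears in your necessity argument (``term-by-term comparison'' alone does not force $y\ge1$). None of this is hard, but it is the actual content of Parry's lemma and should be spelled out rather than asserted.
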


The following result of R\'enyi implies that the dynamical system $([0,1],T_\beta)$ admits $\log\beta$
as its topological entropy.

\begin{lem}[R\'enyi \cite{Renyi}]\label{le2}
Let $\beta>1.$ For any $n\geq 1,$ $$\beta^{n}\leq\#\Sigma_{\beta}^n\leq\frac{\beta^{n+1}}{\beta-1},$$ where $\#$ denotes the
   cardinality of a finite set.
\end{lem}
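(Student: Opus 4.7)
The plan is to interpret $\#\Sigma_\beta^n$ geometrically as the number $N_n$ of maximal intervals of monotonicity of the iterate $T_\beta^n:[0,1)\to[0,1)$. Each admissible block $(w_1,\ldots,w_n)\in\Sigma_\beta^n$ determines a cylinder $I_n(w_1,\ldots,w_n)=\{x\in[0,1):\epsilon_i(x,\beta)=w_i,\ 1\le i\le n\}$, on which $T_\beta^n$ is an affine map of slope $\beta^n$ sending $I_n(w)$ bijectively onto a right-open interval $[0,r_w)$ with $r_w\in(0,1]$. These cylinders form a partition of $[0,1)$, so $\#\Sigma_\beta^n=N_n$, $|I_n(w)|=\beta^{-n}r_w$, and $\sum_w|I_n(w)|=1$. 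The lower bound is then immediate: since $|I_n(w)|\le\beta^{-n}$ for every $w$, summation yields $1\le\beta^{-n}\#\Sigma_\beta^n$, that is $\#\Sigma_\beta^n\ge\beta^n$.

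For the upper bound I set up a one-step recursion between $N_n$ and $N_{n+1}$. Inside the cylinder $I_n(w)$, the rank-$(n+1)$ cylinders are obtained by grouping points according to the value of $\lfloor\beta\cdot T_\beta^n x\rfloor$, which runs through the integers in $[0,\beta r_w)$; hence this refinement has exactly $\lceil\beta r_w\rceil$ pieces, each corresponding to a legitimate admissible extension $(w,j)$. Using $\lceil t\rceil\le t+1$ together with $\sum_w r_w=\beta^n$ (a restatement of $\sum_w|I_n(w)|=1$),
\[
N_{n+1}=\sum_{w\in\Sigma_\beta^n}\lceil\beta r_w\rceil\le\sum_{w\in\Sigma_\beta^n}(\beta r_w+1)=\beta^{n+1}+N_n.
\]
Starting from $N_0=1$ and telescoping,
\[
N_n\le 1+\sum_{k=1}^n\beta^k=\frac{\beta^{n+1}-1}{\beta-1}\le\frac{\beta^{n+1}}{\beta-1},
\]
which is the claimed upper bound.

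The delicate point is the upper-bound recursion: one must verify that the rank-$(n+1)$ cylinders inside $I_n(w)$ are precisely the $\lceil\beta r_w\rceil$ subintervals cut out by the integer parts of $\beta\cdot T_\beta^n x$, that each one produces an admissible extension (so that no admissible $(n+1)$-word is missed), and that distinct choices of $w$ contribute disjoint families (so that none is double-counted). Once this geometric identification is in place, the slope bound $|I_n(w)|\le\beta^{-n}$ for the lower estimate and the geometric-series telescoping for the upper are entirely routine.
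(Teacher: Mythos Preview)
Your argument is correct. The lower bound follows immediately from $|I_n(w)|\le\beta^{-n}$ and the fact that the cylinders partition $[0,1)$; the recursion $N_{n+1}=\sum_{w}\lceil\beta r_w\rceil\le\beta^{n+1}+N_n$ is justified exactly as you describe, and the telescoping to $\sum_{k=0}^{n}\beta^{k}$ is clean. The ``delicate point'' you flag is not really delicate: since $T_\beta^n$ maps $I_n(w)$ affinely onto $[0,r_w)$, the values of $\lfloor\beta T_\beta^n x\rfloor$ for $x\in I_n(w)$ are precisely $0,1,\ldots,\lceil\beta r_w\rceil-1$, each realised on a nonempty subinterval, so the count and the admissibility of each extension are automatic.

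As for comparison with the paper: there is nothing to compare. The paper states the lemma with attribution to R\'enyi and gives no proof. Your argument is the standard one (essentially R\'enyi's original), so you have supplied exactly what the paper omits.
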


It is clear from this lemma that  $$\lim_{n\to\infty}\frac{\log\left(\#\Sigma_{\beta}^n\right)}{n}=\log\beta.$$
For any $(\epsilon_1,\cdots,\epsilon_n)\in \Sigma_{\beta}^n,$ call
$$I_{n}(\epsilon_1,\cdots,\epsilon_n):=\{x\in[0,1),\epsilon_j(x,\beta)=\epsilon_j,1\leq j\leq n\}$$
an $n$-th order cylinder $(\text{with respect to the base}~\beta)$. It is a left-closed and right-open interval with the left endpoint $$
\frac{\epsilon_1}{\beta}+\frac{\epsilon_2}{\beta^2}+\cdots+\frac{\epsilon_n}{\beta^n}
$$ and of length $$|I_n(\epsilon_1,\cdots,\epsilon_n)|\leq\frac{1}{\beta^n}.$$
Here and throughout the paper, we use $|\cdot|$ to denote the length of an interval. Note that the unit interval can be naturally partitioned into a disjoint union of cylinders; that is  for any $n\geq 1$,
\begin{equation*}\label{e4}[0,1]=\bigcup\limits_{(\epsilon_1,\cdots,\epsilon_n)\in\Sigma_{\beta}^n}
I_{n}(\epsilon_1,\cdots,\epsilon_n).
\end{equation*}

One difficulty in studying the metric properties of $\beta$-expansion is that the length of a cylinder is not regular. It may happen that $|I_n(\epsilon_1,\cdots, \epsilon_n)|\ll \beta^{-n}$. { Here $a\ll b$ is used to indicate that there exists a constant $c>0$ such that $a\leq cb$. We write $a\asymp b$ if $a\ll b\ll a$. } The following notation plays an important role to bypass this difficulty.

\begin{dfn}[Full cylinder] A cylinder $I_n(\epsilon_1,\cdots,\epsilon_n)$ is called full if it has maximal length, i.e. if $$|I_n(\epsilon_1,\cdots,\epsilon_n)|=\frac{1}{\beta^n}.$$ Correspondingly, we also call the word $(\epsilon_1,\cdots,\epsilon_n)$, defining the full cylinder $I_n(\epsilon_1,\cdots,\epsilon_n)$, a full word.
\end{dfn}
Next, we collect some properties about the distribution of full cylinders.
 \begin{pro}[Fan and Wang \cite{FanWang}]\label{le4}
An $n$-th order cylinder $I_{n}(\epsilon_{1}\cdots\epsilon_{n})$ is full, if and only if for any admissible sequence $(\epsilon_{1}',\epsilon_{2}',\cdots,\epsilon_{m}')\in\Sigma_{\beta}^m$ with $m\geq 1$, $$(\epsilon_{1}\cdots\epsilon_{n}, \epsilon_{1}',\epsilon_{2}',\cdots,\epsilon_{m}')\in\Sigma_{\beta}^{n+m}.$$
Moreover
      $$|I_{n+m}(\epsilon_{1},\cdots,\epsilon_{n},\epsilon_{1}',\cdots,\epsilon_{m}')|=
      |I_{n}(\epsilon_{1},\cdots,\epsilon_{n})|\cdot|I_{m}(\epsilon_{1}',\cdots,\epsilon_{m}')|.$$
So, for any two full cylinders $I_{n}(\epsilon_{1}\cdots\epsilon_{n}),~ I_{m}(\epsilon_{1}',\epsilon_{2}',\cdots,\epsilon_{m}')$, the cylinder $$I_{n+m}(\epsilon_{1},\cdots,\epsilon_{n},\epsilon_{1}',\cdots,\epsilon_{m}')$$ is also full.
\end{pro}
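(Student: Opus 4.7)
My plan rests on the observation that on a cylinder $I_n(\epsilon_1,\ldots,\epsilon_n)$ with left endpoint $L_n=\sum_{j=1}^n\epsilon_j\beta^{-j}$, the iterate $T_\beta^n$ is the affine map $x\mapsto\beta^n(x-L_n)$, so $T_\beta^n(I_n)=[0,\beta^n|I_n|)$. Since $|I_n|\leq\beta^{-n}$ is automatic, fullness of $I_n$ is equivalent to $T_\beta^n$ mapping $I_n$ \emph{onto} $[0,1)$. The whole proposition will be a consequence of this reformulation together with the fact that admissibility of the prefix $(\epsilon_1,\ldots,\epsilon_n,\epsilon_1',\ldots,\epsilon_m')$ is equivalent to the existence of a point $x\in I_n(\epsilon_1,\ldots,\epsilon_n)$ with $T_\beta^n x\in I_m(\epsilon_1',\ldots,\epsilon_m')$.

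Granting the reformulation, the forward implication of the characterization is immediate: given any admissible $(\epsilon_1',\ldots,\epsilon_m')$, pick $y\in I_m(\epsilon_1',\ldots,\epsilon_m')\subset[0,1)$ and let $x\in I_n$ be its unique preimage under the affine bijection $T_\beta^n|_{I_n}$; by construction the digit sequence of $x$ begins with the concatenated word, so that word lies in $\Sigma_\beta^{n+m}$.

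For the converse, suppose every admissible continuation stays admissible. Then for each admissible $(\epsilon_1',\ldots,\epsilon_m')$ there is some $x\in I_n$ with $T_\beta^n x\in I_m(\epsilon_1',\ldots,\epsilon_m')$, so $T_\beta^n(I_n)$ meets every $m$-th order cylinder. Set $t=\beta^n|I_n|\in(0,1]$ and suppose for contradiction $t<1$. Any $y\in[t,1)$ lies in some $I_m(w')$ of length at most $\beta^{-m}$; because $T_\beta^n(I_n)=[0,t)$ must intersect $I_m(w')$, the left endpoint of this cylinder is $\leq t$, hence $y\leq t+\beta^{-m}$. Letting $m\to\infty$ gives $y\leq t$, contradicting $y\geq t$ for $y>t$. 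Therefore $t=1$ and $I_n$ is full.

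The multiplicative length identity and the fullness of concatenations now fall out at once: when $I_n$ is full, $T_\beta^n|_{I_n}$ is an affine bijection onto $[0,1)$ with constant slope $\beta^n$, so the preimage of $I_m(\epsilon_1',\ldots,\epsilon_m')$ inside $I_n$ coincides with $I_{n+m}(\epsilon_1,\ldots,\epsilon_n,\epsilon_1',\ldots,\epsilon_m')$ and has length $\beta^{-n}|I_m(\epsilon_1',\ldots,\epsilon_m')|=|I_n|\cdot|I_m(\epsilon_1',\ldots,\epsilon_m')|$; if additionally the second cylinder is full, this length equals $\beta^{-(n+m)}$, the maximal value, so the concatenated cylinder is itself full. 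I expect the only genuine subtlety to be the limiting argument used to rule out $t<1$ in the converse direction; the remaining steps are essentially bookkeeping with the affine form of $T_\beta^n$ on a single cylinder.
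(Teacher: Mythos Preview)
The paper does not supply its own proof of this proposition; it is quoted as a result of Fan and Wang \cite{FanWang} and used as a black box. Your argument is therefore not a comparison target but a genuine, self-contained proof, and it is correct.

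Your key reformulation---that $T_\beta^n$ restricted to $I_n(\epsilon_1,\ldots,\epsilon_n)$ is the affine map $x\mapsto\beta^n(x-L_n)$, so fullness is equivalent to $T_\beta^n(I_n)=[0,1)$---is exactly the right lens, and it makes both the forward implication and the length identity transparent. The only step with any content is the converse, and your limiting argument handles it cleanly: if $t=\beta^n|I_n|<1$, pick $y\in(t,1)$; the order-$m$ cylinder containing $y$ must meet $[0,t)$, forcing $y<t+\beta^{-m}$ for every $m$, hence $y\le t$, a contradiction. (Your phrasing ``contradicting $y\ge t$ for $y>t$'' is slightly garbled---the contradiction is with the strict inequality $y>t$---but the mathematics is sound.) The concluding observations about the multiplicative length and fullness of concatenations are immediate, as you note.
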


\begin{lem}[Bugeaud and Wang \cite{BugeaudWang}]\label{le3} For $n\geq 1$, among every $n+1$ consecutive cylinders of order $n$, there exists at least one full cylinder.
\end{lem}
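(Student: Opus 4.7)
The plan is to argue by contradiction. Suppose that for some $n\geq 1$ there exist $n+1$ consecutive order-$n$ cylinders $I_n(w^{(0)}),I_n(w^{(1)}),\dots,I_n(w^{(n)})$ (ordered from left to right on $[0,1)$, equivalently lexicographically $w^{(0)}\prec w^{(1)}\prec\cdots\prec w^{(n)}$) that are all non-full. I will attach to each $w^{(i)}$ an integer $m_i\in\{1,\dots,n\}$ measuring the degree of non-fullness and show that $m_i$ must strictly increase in $i$; since only $n$ values are available, this yields a contradiction.

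The first step is to characterize non-full cylinders via Parry's criterion (Lemma \ref{le1}) and Proposition \ref{le4}. By Proposition \ref{le4}, $I_n(\epsilon_1,\dots,\epsilon_n)$ is full iff every admissible infinite tail can be appended to $(\epsilon_1,\dots,\epsilon_n)$ with the concatenation still admissible. Applying Lemma \ref{le1} to such concatenations (and testing with tails approaching $\epsilon^*(1,\beta)$) yields the equivalent criterion that the strict inequality $(\epsilon_k,\dots,\epsilon_n)\prec(\epsilon_1^*,\dots,\epsilon_{n-k+1}^*)$ holds for every $1\leq k\leq n$. Consequently, $I_n(\epsilon_1,\dots,\epsilon_n)$ is non-full iff there exists some $m\in\{1,\dots,n\}$ with $(\epsilon_{n-m+1},\dots,\epsilon_n)=(\epsilon_1^*,\dots,\epsilon_m^*)$; for such a word I define $m(w)$ to be the largest such $m$.

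The crux is the monotonicity claim: if $w^{(i)}$ and $w^{(i+1)}$ are lexicographically consecutive admissible words and both are non-full, then $m(w^{(i+1)})>m(w^{(i)})$. Set $a=m(w^{(i)})$. By the admissibility constraint at shift $k=n-a+1$ applied to any admissible word, the suffix $(\epsilon_1^*,\dots,\epsilon_a^*)$ is the lexicographically largest admissible block that can occupy the last $a$ positions; consequently any admissible word strictly greater than $w^{(i)}$ must already exceed it within the first $n-a$ coordinates, so $(w^{(i+1)}_1,\dots,w^{(i+1)}_{n-a})>(w^{(i)}_1,\dots,w^{(i)}_{n-a})$ lexicographically. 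Now form the auxiliary word $v$ by taking the first $n-a$ digits of $w^{(i+1)}$ and appending $a$ zeros. Replacing a suffix by zeros only weakens each Parry shift inequality, so $v$ is admissible; moreover $v>w^{(i)}$ (by the above) and $v\leq w^{(i+1)}$ (they agree on the first $n-a$ positions and the zero tail of $v$ is lex-minimal). By minimality of $w^{(i+1)}$ as the immediate successor of $w^{(i)}$, we conclude $v=w^{(i+1)}$, so the last $a$ digits of $w^{(i+1)}$ are all $0$. If $m(w^{(i+1)})=b$ with $b\leq a$, the digit of $w^{(i+1)}$ at position $n-b+1$, which lies in the zero block, would equal $\epsilon_1^*=\lfloor\beta\rfloor\geq 1$, a contradiction. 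Hence $m(w^{(i+1)})\geq a+1$.

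Iterating this inequality gives $m(w^{(n)})\geq m(w^{(0)})+n\geq n+1$, which contradicts $m(w^{(n)})\leq n$, and the lemma follows. The main technical obstacle is the successor analysis in the third paragraph, specifically showing that the zero string is the lex-minimal admissible continuation of the incremented prefix; the minimality argument via the auxiliary word $v$ sidesteps a direct Parry verification and appears to be the cleanest route, with the positivity $\epsilon_1^*\geq 1$ being the decisive ingredient that forces the strict increase of $m(\,\cdot\,)$.
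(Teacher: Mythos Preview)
The paper does not supply a proof of this lemma; it is quoted verbatim from Bugeaud--Wang \cite{BugeaudWang} and used as a black box. Your argument is correct and is essentially the standard proof: the characterisation of non-full words as those having some suffix equal to a prefix $(\epsilon_1^*,\dots,\epsilon_m^*)$ of the quasi-greedy expansion of $1$ is exactly the criterion obtained from Proposition~\ref{le4} together with Parry's lemma, and your successor analysis (showing that the lexicographic successor of a non-full word ending in $(\epsilon_1^*,\dots,\epsilon_a^*)$ must end in at least $a$ zeros, hence has strictly larger defect index) is precisely the mechanism used in the original source. The auxiliary-word trick with $v=(w^{(i+1)}_1,\dots,w^{(i+1)}_{n-a},0^a)$ is a clean way to pin down the successor without an explicit carry computation, and the contradiction $\epsilon_1^*\geq 1$ versus the forced zero is the same endgame. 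In short, you have reconstructed the Bugeaud--Wang proof that the paper omits.
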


As a consequence, one has the following relationship between balls and cylinders.
\begin{pro}[Covering property]\label{p1}Let $J$ be an interval of length ${\beta}^{-l}$ with $l\geq 1$. Then it can  be covered by at most $2(l+1)$ cylinders of order $l$.
\end{pro}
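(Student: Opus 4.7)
The plan is to leverage Lemma \ref{le3}, which guarantees a full cylinder of order $l$ (of length exactly $\beta^{-l}$) in every window of $l+1$ consecutive order-$l$ cylinders. Since $|J| = \beta^{-l}$ matches the length of a full cylinder, an interval of that length cannot contain a full cylinder together with any extra material on the side.

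First, I would enumerate the order-$l$ cylinders of $[0,1)$ in their natural left-to-right order and partition them into consecutive blocks $B_{1}, B_{2}, \ldots, B_{K}$ of $l+1$ cylinders each (with a possibly shorter terminal block). By Lemma \ref{le3}, every block of size exactly $l+1$ contains at least one full cylinder, so its total length satisfies $|B_{j}| \geq \beta^{-l}$.

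Next, I would show that $J$ can meet at most two of these blocks. Suppose for contradiction that $J$ intersects three consecutive blocks $B_{j-1}, B_{j}, B_{j+1}$. Since the blocks are consecutive intervals and $J$ is an interval, any point of $J \cap B_{j-1}$ lies to the left of $B_{j}$ and any point of $J \cap B_{j+1}$ lies to its right, so $B_{j} \subseteq J$. Because $B_{j+1}$ exists, $B_{j}$ is not the terminal block, hence has exactly $l+1$ cylinders and thus $|B_{j}| \geq \beta^{-l} = |J|$. Combined with $B_{j} \subseteq J$, this forces $B_{j} = J$ as sets, contradicting $J \cap B_{j-1} \neq \emptyset$ (since $B_{j-1}$ and $B_{j}$ are disjoint).

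Since $J$ meets at most two blocks and each block holds at most $l+1$ cylinders, at most $2(l+1)$ cylinders of order $l$ intersect $J$, and these cover $J$. I do not expect any real obstacle here; the argument is essentially bookkeeping once Lemma \ref{le3} is in hand, and the only minor subtlety is tracking that the middle block in the three-block scenario cannot be the (possibly short) terminal one.
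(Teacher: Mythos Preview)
Your proof is correct and follows essentially the same approach as the paper's: both invoke Lemma~\ref{le3} to show that every run of $l+1$ consecutive order-$l$ cylinders has total length at least $\beta^{-l}$, and deduce from this that an interval of length $\beta^{-l}$ cannot meet more than $2(l+1)$ such cylinders. The paper packages the final step slightly differently---it bounds the total length of $2(l+1)$ consecutive cylinders by $>2\beta^{-l}$ rather than partitioning into fixed blocks---but the content is the same.
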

\begin{proof} By Lemma \ref{le3}, among any $2(l+1)$ consecutive cylinders of order $l$, there are at least $2$ full cylinders. So the total length of these intervals is larger than $2{\beta}^{-l}$. Thus $J$ can be covered by at most $2(l+1)$ cylinders of order $l$.
\end{proof}

The following result may have an independent interest.
\begin{pro}[Packing property]\label{p2} Fix $0<\epsilon<1$.  Let $~n_0$ be an integer such that $2n^2\beta<{\beta}^{(n-1)\epsilon}$ for all $n\ge n_0$. Let $J\subset [0,1]$ be an interval of length $r$ with $0<r<2n_{0}\beta^{-n_0}$. Then inside $J$, there exists a full cylinder $I_n$ satisfying
$$r\geq|I_n|>r^{1+\epsilon}.$$
\end{pro}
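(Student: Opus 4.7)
My plan is to produce the desired full cylinder by carefully choosing its order $n$ as a function of $r$. What we want is $|I_n|=\beta^{-n}$ to lie in $(r^{1+\epsilon},r]$, and at the same time $J$ must be long enough to actually contain a full cylinder of that order. Since Lemma~\ref{le3} guarantees a full cylinder among every $n+1$ consecutive cylinders of order $n$, it suffices to arrange that $J$ contains at least $n+1$ consecutive order-$n$ cylinders entirely.

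Concretely, I would define $n$ to be the largest integer satisfying $r<2n\beta^{-(n-1)}$. The sequence $a_n:=2n\beta^{-(n-1)}$ satisfies $a_{n+1}/a_n=(n+1)/(n\beta)$, so it is eventually decreasing and tends to $0$, making this choice well-posed. The hypothesis $r<2n_0\beta^{-n_0}<2n_0\beta^{-(n_0-1)}=a_{n_0}$ forces $n\geq n_0$, while maximality of $n$ yields the companion inequality $r\geq a_{n+1}=2(n+1)\beta^{-n}$.

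Given this $n$, the existence of a full cylinder inside $J$ follows from a direct counting argument. Order-$n$ cylinders partition $[0,1]$ and each has length at most $\beta^{-n}$, so $J$ meets at least $r\beta^n$ of them. Discarding the (at most two) cylinders that straddle the endpoints of $J$ still leaves at least $r\beta^n-2\geq 2(n+1)-2=2n\geq n+1$ consecutive cylinders fully contained in $J$, and Lemma~\ref{le3} then delivers a full cylinder $I_n\subset J$. The upper inequality $|I_n|=\beta^{-n}\leq r$ is immediate from $r\geq 2(n+1)\beta^{-n}$.

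The substantive step is the lower inequality $\beta^{-n}>r^{1+\epsilon}$. Starting from $r<2n\beta^{-(n-1)}=(2n\beta)\beta^{-n}$ and raising to the $(1+\epsilon)$-th power, I get $r^{1+\epsilon}<(2n\beta)^{1+\epsilon}\beta^{-n(1+\epsilon)}$, so the target reduces to showing $\beta^{n\epsilon}>(2n\beta)^{1+\epsilon}$. Since $\epsilon<1$, the elementary inequality $(2n)^{1+\epsilon}\leq 2n^2$ holds for all sufficiently large $n$ (note $(2n)^{1+\epsilon}/(2n^2)=2^{\epsilon}n^{\epsilon-1}\to 0$), and then the standing hypothesis $2n^2\beta<\beta^{(n-1)\epsilon}$ gives
\[
(2n\beta)^{1+\epsilon}\leq 2n^2\beta^{1+\epsilon}=(2n^2\beta)\beta^{\epsilon}<\beta^{(n-1)\epsilon}\cdot\beta^{\epsilon}=\beta^{n\epsilon},
\]
as required. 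The main obstacle is precisely this final piece of algebraic bookkeeping, namely aligning the crude bound $(2n)^{1+\epsilon}\leq 2n^2$ with the technical hypothesis on $n_0$; this is dealt with by ensuring (if necessary, after enlarging $n_0$) that $n_0$, and hence $n$, is large enough for the elementary inequality to kick in, which is consistent with the stated assumption on $n_0$.
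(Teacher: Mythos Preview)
Your proof is correct and follows essentially the same route as the paper: pick $n$ by sandwiching $r$ between consecutive terms of a sequence of the shape $2n\beta^{-n}$ (the paper uses $2n\beta^{-n}\le r<2(n-1)\beta^{-(n-1)}$, you use the shifted $2(n+1)\beta^{-n}\le r<2n\beta^{-(n-1)}$), count cylinders to invoke Lemma~\ref{le3}, and verify the two size bounds from the hypothesis on $n_0$. If anything, your treatment of the final inequality $(2n\beta)^{1+\epsilon}<\beta^{n\epsilon}$ is more explicit than the paper's, which simply asserts the corresponding bound ``by the choice of $n_0$''.
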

\begin{proof} Let $n>n_0$ be the integer such that $$2n\beta^{-n}\leq r<2(n-1)\beta^{-n+1}.$$
Since every cylinder of order $n$ is of length at most $\beta^{-n}$, the interval $J$ contains at least $2n-2\geq n+1$ consecutive cylinders of order $n$. Thus, by Lemma \ref{le3}, it contains a full cylinder of  order $n$ and we denote such a cylinder by  $I_n$. By the choice of $n_0$, we have
$$r\geq|I_n|=\beta^{-n}>\Big(2(n-1)(\beta^{-n+1})\Big)^{1+\epsilon}>r^{1+\epsilon}.$$
This completes the proof.
\end{proof}

Now we define a sequence of numbers $\beta_N$ approximating $\beta$ from below. For any $N$ with $\epsilon_N^*(\beta)\geq1,$ define $\beta_N$ to be the unique real solution to the algebraic equation
\begin{equation*}\label{g1}
1=\frac{\epsilon_{1}^*(\beta)}{\beta_N}+\frac{\epsilon_{2}^*(\beta)}{\beta_N^2}+\cdots+\frac{\epsilon_{N}^*(\beta)}{\beta_N^N}.
\end{equation*}
Then $\beta_N$ approximates $\beta$ frow below and the $\beta_N$-expansion of the unity is
$$(\epsilon_{1}^*(\beta),\cdots,\epsilon_{N-1}^*(\beta),\epsilon_{N}^*(\beta)-1)^{\infty}.$$

More importantly, by the criterion of admissible sequence, we have, for any $(\epsilon_1,\cdots,\epsilon_n)\in \Sigma_{\beta_N}^n$ and
$(\epsilon_1',\cdots,\epsilon_m')\in \Sigma_{\beta_N}^m$, that
\begin{equation}\label{g2}
(\epsilon_1,\cdots,\epsilon_n,0^N,\epsilon_1',\cdots,\epsilon_m')\in\Sigma_{\beta_N}^{n+N+m},
\end{equation}
where $0^N$ means a zero word of length $N$.

From the assertion $(\ref{g2})$, we get the following proposition.
\begin{pro}
For any $(\epsilon_1,\cdots,\epsilon_n)\in \Sigma_{\beta_N}^n$, $I_{n+N}(\epsilon_1,\cdots,\epsilon_n,0^N)$ is a full cylinder. So,
$$\frac{1}{\beta^{n+N}}\leq|I_{n}(\epsilon_1,\cdots,\epsilon_n)|\leq\frac{1}{\beta^{n}}.$$
\end{pro}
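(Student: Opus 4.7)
The plan is to derive the length estimates by establishing that $I_{n+N}(\epsilon_1,\ldots,\epsilon_n,0^N)$ is full in the $\beta$-system itself, rather than merely in the $\beta_N$-system where $(\ref{g2})$ naturally sits. First, since $\beta_N\leq\beta$ one has $\Sigma_{\beta_N}^n\subseteq\Sigma_\beta^n$, so the word $(\epsilon_1,\ldots,\epsilon_n,0^N)$ is $\beta$-admissible (apply $(\ref{g2})$ with an empty suffix), and the cylinder in question is a bona fide sub-cylinder of $I_n(\epsilon_1,\ldots,\epsilon_n)$ in the $\beta$-system. The upper bound $|I_n(\epsilon_1,\ldots,\epsilon_n)|\leq\beta^{-n}$ is already on record, so only the lower bound requires work.

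Next, I would invoke the Fan--Wang characterization (Proposition \ref{le4}) in base $\beta$: fullness of $I_{n+N}(\epsilon_1,\ldots,\epsilon_n,0^N)$ is equivalent to the statement that for every $(\epsilon_1',\ldots,\epsilon_m')\in\Sigma_\beta^m$ the concatenation $(\epsilon_1,\ldots,\epsilon_n,0^N,\epsilon_1',\ldots,\epsilon_m')$ lies in $\Sigma_\beta^{n+N+m}$. Applying Parry's criterion (Lemma \ref{le1}) reduces this to a lexicographic check against $\epsilon^*(1,\beta)$ on every suffix. Two of the three kinds of suffixes are easy: suffixes beginning at positions $>n+N$ are suffixes of the $\beta$-admissible word $(\epsilon_1',\ldots,\epsilon_m')$, and suffixes beginning inside the zero block start with $0<\epsilon_1^*(\beta)$.

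The hard part will be the remaining case, a suffix beginning at some position $k$ with $1\leq k\leq n$. Here I would lean on two observations: (i) the $\beta_N$-expansion of unity, $\epsilon^*(1,\beta_N)=(\epsilon_1^*(\beta),\ldots,\epsilon_{N-1}^*(\beta),\epsilon_N^*(\beta)-1)^\infty$, agrees with $\epsilon^*(1,\beta)$ on the first $N-1$ coordinates and drops by one at coordinate $N$; and (ii) $\beta_N$-admissibility of $(\epsilon_1,\ldots,\epsilon_n)$ yields $(\epsilon_k,\ldots,\epsilon_n,0,0,\ldots)\prec\epsilon^*(1,\beta_N)$, with a first disagreement at some index $j^*$. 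A brief case split on $j^*<N$ (the strict inequality transfers to $\epsilon^*(1,\beta)$ at the same index since $\epsilon_j^*(\beta_N)=\epsilon_j^*(\beta)$ for $j<N$) versus $j^*\geq N$ (positions $1,\ldots,N-1$ still match $\epsilon^*(1,\beta)$ and at position $N$ the value is at most $\epsilon_N^*(\beta)-1<\epsilon_N^*(\beta)$) shows that a strict inequality with $\epsilon^*(1,\beta)$ is witnessed inside the first $N$ coordinates. Because the suffix $(\epsilon_k,\ldots,\epsilon_n,0^N,\epsilon_1',\ldots)$ coincides with $(\epsilon_k,\ldots,\epsilon_n,0,0,\ldots)$ on those first $N$ coordinates (they only diverge past position $n-k+1+N\geq N$), the strict inequality carries over. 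This completes the fullness argument, so $|I_{n+N}(\epsilon_1,\ldots,\epsilon_n,0^N)|=\beta^{-(n+N)}$, and the inclusion $I_{n+N}(\epsilon_1,\ldots,\epsilon_n,0^N)\subseteq I_n(\epsilon_1,\ldots,\epsilon_n)$ yields the desired lower bound.
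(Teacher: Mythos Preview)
Your proof is correct and in fact supplies the argument that the paper omits: the paper offers no proof beyond the sentence ``From the assertion $(\ref{g2})$, we get the following proposition.'' Since $(\ref{g2})$ as stated only guarantees that appending a $\beta_N$-admissible suffix after $0^N$ yields a $\beta_N$-admissible word, whereas fullness in the $\beta$-system (which is what the bound $\beta^{-(n+N)}$ requires) demands, via Proposition~\ref{le4}, that arbitrary $\beta$-admissible suffixes be handled, your Parry-criterion case analysis on the location $j^*$ of the first disagreement with $\epsilon^*(1,\beta_N)$ is precisely the missing verification. The paper evidently regards this passage as routine; your write-up makes it explicit, and the key observation---that the first strict drop below $\epsilon^*(1,\beta)$ must occur within the first $N$ coordinates, where the actual suffix and its zero-padded surrogate still coincide---is exactly the right one.
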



We end this section with a definition of the pressure function for $\beta$-dynamical system associated to some continuous potential $g$. The
readers are referred to \cite{Walters} for more details.
\begin{equation}\label{g3}
P(g, T_\beta):=\lim\limits_{n\rightarrow \infty}\frac{1}{n}\log\sum\limits_{(\epsilon_1,\cdots,\epsilon_n)\in\Sigma_\beta^n}
\sup\limits_{y\in I_n(\epsilon_1,\cdots,\epsilon_n)}e^{S_ng(y)},
\end{equation}
where $S_ng(y)$ denotes the ergodic sum $\sum_{j=0}^{n-1}g(T^j_\beta y)$. {Since $g$ is continuous, the limit does not depend upon the choice of $y$.} The existence of the limit \eqref{g3} follows from the subadditivity:
$$\log\sum\limits_{(\epsilon_1,\cdots,\epsilon_n, \epsilon_1^\prime,\cdots,\epsilon_m^\prime)\in\Sigma_\beta^{n+m}}
e^{S_{n+m}g(y)}\leq \log\sum\limits_{(\epsilon_1,\cdots,\epsilon_n)\in\Sigma_\beta^{n}}
e^{S_{n}g(y)}+\log\sum\limits_{(\epsilon_1^\prime,\cdots,\epsilon_m^\prime)\in\Sigma_\beta^{m}}
e^{S_{m}g(y)}.$$

\section{Proof of Theorem \ref{t2}: the upper bound}\label{upperbound}

As is typical in determining the Hausdorff dimension of a set; we split the proof of Theorem $\ref{t2}$ into two parts: the upper bound and the lower bound.

For any $U=(\epsilon_1,\cdots\epsilon_n)\in\Sigma_\beta^n$ and $W=(\omega_1,\cdots,\omega_n)\in\Sigma_\beta^n,$  we
always take $$x^*=\frac{\epsilon_{1}}{\beta}+\frac{\epsilon_{2}}{\beta^{2}}+\cdots+\frac{\epsilon_{n}}{\beta^{n}}$$ to be the left endpoint of $I_n(U)$ and $$y^*=\frac{\omega_{1}}{\beta}+\frac{\omega_{2}}{\beta^{2}}+\cdots+\frac{\omega_{n}}{\beta^{n}}$$ to be the left endpoint of $I_n(W)$.

%

Instead of directly considering the set $E(T_\beta,f,g)$, we will consider a closely related lim sup set
$$\overline{E}(T_\beta,f,g)=\bigcap_{N=1}^{\infty}\bigcup_{n=N}^{\infty}\bigcup_{U,W\in \Sigma_{\beta}^{n}}J_n(U)\times J_n(W),$$
where
\begin{align*}
 J_n(U)&=\{x\in[0,1]:|T_\beta^nx-x_0|<e^{-S_nf(x^*)}\},\\
J_n(W)&=\{y\in[0,1]:|T_\beta^ny-y_0|<e^{-S_ng(y^*)}\}.
 \end{align*}
In the sequel it will be clear that the set $\overline{E}(T_\beta,f,g)$ is easier to handle. Since $f$ and $g$ are continuous functions, for any $\delta>0$ and $n$ large enough, we have$$|S_nf(x)-S_nf(x^*)|<n\delta,\quad |S_ng(y)-S_ng(y^*)|<n\delta.$$
Thus we have $$\overline{E}(T_\beta, f+\delta,g+\delta)\subset E(T_\beta,f,g)\subset \overline{E}(T_\beta, f-\delta,g-\delta).$$
Therefore, to calculate the Hausdorff dimension of the set $E(T_\beta,f,g)$,  it is sufficient to determine the Hausdorff dimension of $\overline{E}(T_\beta,f,g)$.

The length of $J_n(U)$ satisfies $$|J_n(U)|\leq2\beta^{-n}e^{-S_nf(x^*)},$$
since, for every $x\in J_n(U)$, we have $$|x-(\frac{\epsilon_1}{\beta}+\cdots+\frac{\epsilon_n+x_0}{\beta^n})|=\frac{|T_\beta^nx-x_0|}{\beta^n}<\beta^{-n}e^{-S_nf(x^*)}.$$
Similarly, $$|J_n(W)|\leq2\beta^{-n}e^{-S_nf(y^*)}.$$
So, $\overline{E}(T_\beta,f,g)$ is a $\limsup$ set defined by a collection of rectangles. There are two ways to cover a single rectangle
$J_n(U)\times J_n(W)$ as follows.

\subsection{Covering by shorter side length}\label{sectionshorter} Recall that $f(x)\geq g(y)$ for all $x, y\in [0, 1]$. This implies that the length of $J_n(U)$ is shorter than the length of $J_n(W)$. Then the rectangle $J_n(U)\times J_n(W)$ can be covered by
 $$\frac{\beta^{-n}e^{-S_ng(y^*)}}{\beta^{-n}e^{-S_nf(x^*)}}=\frac{e^{S_nf(x^*)}}{e^{S_ng(y^*)}}$$balls of side length $\beta^{-n}e^{-S_nf(x^*)}.$

%

Since for each $N$, $$\overline{E}(T_\beta,f,g)\subseteq\bigcup_{n=N}^{\infty}\bigcup_{U,W\in \Sigma_{\beta}^{n}}J_n(U)\times J_n(W),$$
therefore, the $s$-dimensional Hausdorff measure $\H^s$ of $\overline{E}(T_\beta,f,g)$ can be estimated as
$$
\H^s\Big(\overline{E}(T_\beta,f,g)\Big)\le \liminf_{N\to\infty}\sum_{n=N}^{\infty}\sum_{U,W\in \Sigma_{\beta}^n}
\frac{e^{S_nf(x^*)}}{e^{S_ng(y^*)}}\Big(\frac{1}{\beta^ne^{S_nf(x^*)}}\Big)^s.
$$

Define $$s_1=\inf\{s\geq 0: P(f-s(\log\beta+f))+P(-g)\leq 0\}.$$
Then from the definition of the pressure function \eqref{g3}, it is clear that

$$P(f-s(\log\beta+f))+P(-g)\leq 0 \quad \iff \quad \sum_{n=1}^{\infty} \sum_{U,W\in \Sigma_{\beta}^n}
\frac{e^{S_nf(x^*)}}{e^{S_ng(y^*)}}\Big(\frac{1}{\beta^ne^{S_nf(x^*)}}\Big)^s<\infty.$$

Hence, for any $s>s_1$

$$\H^s\Big(\overline{E}(T_\beta,f,g)\Big)= 0.$$ Hence it follows that
$\hdim (\overline{E}(T_\beta,f,g))\leq s_1.$

%
%
%

\subsection{Covering by longer side length}\label{sectionlonger} From the previous subsection (\S\ref{sectionshorter}), it is clear that only one ball of side length  $\beta^{-n}e^{-S_ng(y^*)}$ is needed to cover the rectangle $J_n(U)\times J_n(W)$. Hence, in this case, the $s$-dimensional Hausdorff measure $\H^s$ of $\overline{E}(T_\beta,f,g)$ can be estimated as

$$\mathcal{H}^s(\overline{E}(T_\beta,f,g))\leq\liminf\limits_{N\rightarrow\infty}\sum_{n=N}^{\infty}\sum_{U,W\in \Sigma_{\beta}^n}\Big(
\frac{1}{\beta^{n}e^{S_ng(y^*)}}\Big)^s.
$$
Define $$ s_2=\inf\{s\geq 0: P(-s(\log\beta+g))+\log\beta\leq 0\}.$$Then,  from the definition of pressure function and Hausdorff measure, it follows that, for any $s>s_2$,
$\H^s\Big(\overline{E}(T_\beta,f,g)\Big)= 0.$ Hence,
$$\hdim (\overline{E}(T_\beta,f,g))\leq s_2.$$

%
%

\section{Theorem \ref{t2}: The lower bound }\label{lowerbound}
It should be clear from the previous section that proving the upper bound requires only a suitable covering of the set  $\overline{E}(T_\beta,f,g)$. However, in contrast, proving the lower bound is a challenging task, requiring all possible coverings to be considered  and, therefore, represents the main problem in metric Diophantine approximation (in various settings).  The following principle commonly known as the Mass Distribution Principle  \cite{Falconer_book}  has been used frequently for this purpose.

\begin{pro}[Falconer \cite{Falconer_book}]\label{mdp}  Let $E$ be a Borel measurable set in { $\mathbb{R}^d$} and $\mu$ be a Borel measure with $\mu(E)>0$. Assume that there exist two positive constant $c,\delta$ such that, for any set $U$ with diameter $|U|$ less than $\delta$, $\mu(U)\leq c |U|^s$, then $\hdim E\geq s$.
\end{pro}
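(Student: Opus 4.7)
The plan is to obtain a positive lower bound on the $s$-dimensional Hausdorff measure of $E$ directly from the hypothesis, and then invoke the definition of Hausdorff dimension. The idea is classical: the measure $\mu$ plays the role of a mass distribution supported on $E$, and the pointwise bound $\mu(U)\le c|U|^s$ converts, via countable subadditivity, into a global comparison between $\mu(E)$ and the $s$-cost of any $\delta$-fine cover of $E$.

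Concretely, I would fix an arbitrary countable cover $\{U_i\}_{i\ge 1}$ of $E$ by sets of diameter at most $\delta$. Using monotonicity and countable subadditivity of $\mu$ together with the hypothesis gives
$$0<\mu(E)\le \sum_i \mu(U_i\cap E)\le \sum_i \mu(U_i)\le c\sum_i |U_i|^s.$$
Since this holds for every such cover, taking the infimum on the right yields $c^{-1}\mu(E)\le \mathcal{H}^s_\delta(E)$, where $\mathcal{H}^s_\delta$ denotes the $\delta$-restricted Hausdorff premeasure. Letting $\delta\to 0$ then produces $\mathcal{H}^s(E)\ge c^{-1}\mu(E)>0$, whence $\hdim E\ge s$ by the very definition of the Hausdorff dimension.

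There is no real obstacle in this argument, but two small technicalities deserve a mention. First, one might worry about the exact class of ``sets'' admitted as covering elements: since every set of diameter $d$ is contained in a closed ball of radius $d$, replacing covers by covers of open or closed balls costs at most a multiplicative constant in $c$, which does not affect the conclusion. Second, Borel measurability of $E$ together with the fact that $\mu$ is a Borel measure justifies the subadditivity step above for arbitrary coverings, since one may pass to Borel hulls without increasing diameters. With these points in place, the proposition follows from a two-line chain of inequalities, and it can then be used as a black box in the subsequent construction of a Cantor-like subset of $\overline{E}(T_\beta,f,g)$ supporting a suitable measure for the lower bound in Theorem \ref{t2}.
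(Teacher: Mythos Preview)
Your argument is correct and is exactly the standard proof of the mass distribution principle. Note, however, that the paper does not supply its own proof of this proposition: it is stated as a known result with a citation to Falconer's book, so there is nothing to compare against beyond observing that your argument is the one found in that reference.
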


Specifically, the mass distribution principle replaces the consideration of all coverings by the construction of a particular measure $\mu$ and it is typically deployed in two steps:

\begin{itemize}
\item construct a suitable Cantor subset $\mathcal F_\infty$ of $\overline{E}(T_\beta,f,g)$ and a probability measure $\mu$ supported on $\mathcal F_\infty$,

\item  show that for any fixed $c>0$, $\mu$  satisfies the condition that for any measurable set $U$ of sufficiently small diameter,  $\mu (U)\leq c |U|^s$.
 \end{itemize}
If this can be done, then by the mass distribution principle, it follows that  $$\hdim(\overline{E}(T_\beta,f,g))\geq \hdim(\mathcal F_\infty)\geq s.$$

The main intricate and substantive part  of this entire process is the construction of a suitable Cantor type subset $\mathcal F_\infty$ which supports a probability measure $\mu$.  In the remainder of this paper, we will construct a suitable Cantor type subset of the set $\overline{E}(T_\beta,f,g)$ and demonstrate that it satisfies the mass distribution principle.

\subsection*{Construction of the Cantor subset.}  We construct the Cantor subset $\mathcal F_\infty$ iteratively. Start by fixing an $\epsilon>0$ and assume that $f(x)\geq(1+\epsilon)g(y)\geq g(y)$ for all $x,y\in[0,1].$ We construct a Cantor subset level  by level and note that each level depends on its predecessor. Choose a  rapidly increasing subsequence $\{m_k\}_{k\geq 1}$ of positive integers with $m_1$ large enough.

\subsection{Level 1 of the Cantor set.} Let $n_1=m_1.$ For any $U_1,W_1\in \Sigma_{\beta_N}^{n_1}$ ending with the zero word of order $N$, i.e.  $0^N.$  Let $x_1^*\in I_{n_1}(U_1), ~y_1^*\in I_{n_1}(W_1).$ From Proposition \ref{p2}, it follows that there are two full
cylinders $I_{k_1}(K_1), I_{l_1}(L_1)$ such that
\begin{align*}I_{k_1}(K_1)&\subset B\Big(x_0,e^{-S_{n_1}f(x_1^*)}\Big), \\ I_{l_1}(L_1)& \subset B\Big(y_0,e^{-S_{n_1}g(y_1^*)}\Big), \end{align*}
and $$e^{-S_{n_1}f(x_1^*)}>\beta^{-k_1}>\Big(e^{-S_{n_1}f(x_1^*)}\Big)^{1+\epsilon},$$
$$ e^{-S_{n_1}g(y_1^*)}>\beta^{-l_1}>\Big(e^{-S_{n_1}g(y_1^*)}\Big)^{1+\epsilon}=e^{-S_{n_1}(1+\epsilon)g(y_1^*)}.$$

\medskip

So, we get a subset $I_{n_1+k_1}(U_1,K_1)\times I_{n_1+l_1}(W_1,L_1)$ of $J_{n_1}(U_1)\times J_{n_1}(W_1)$. Since $f(x)\geq (1+\epsilon)g(y)$ for all $x,y\in [0,1],$ then $k_1\geq l_1.$ It should be noted that $K_1$ and $L_1$  depends on $U_1$ and $W_1$ respectively. Consequently, for different $U_1$ and $W_1,$ the choice of  $K_1$ and $L_1$ may be different.

The first level of the Cantor set is defined as
$$\mathcal{F}_1=\Big\{I_{n_1+k_1}(U_1,K_1)\times I_{n_1+l_1}(W_1,L_1):U_1,W_1\in \Sigma_{\beta_N}^{n_1} ~~\text{ending with}~~ 0^N\Big\},$$
which is composed of a collection of rectangles. Next, we cut each rectangle into balls with the radius as the shorter side length of the rectangle:
\begin{align*}
 I_{n_1+k_1}(U_1,K_1)\times I_{n_1+l_1}(W_1,L_1)\rightarrow\Big\{I_{n_1+k_1}&(U_1,K_1)\times I_{n_1+k_1}(W_1,L_1,H_1):H_1\in \Sigma_{\beta_N}^{k_1-l_1}\Big\}.
\end{align*}
Then we get a collection of balls
\begin{align*}
\mathcal{G}_1=\Big\{I_{n_1+k_1}(U_1,K_1)\times I_{n_1+k_1}(W_1,L_1,H_1):U_1,&W_1\in \Sigma_{\beta_N}^{n_1} \  \text{ending with}~~ 0^N,
H_1\in \Sigma_{\beta_N}^{k_1-l_1}\Big\}.
\end{align*}

\subsection{Level 2 of the Cantor set.} Fix a $J_1=I_{n_1+k_1}(\Gamma_1)\times I_{n_1+k_1}(\Upsilon_1)$ in $\mathcal{G}_1$. We define the local sublevel $\mathcal{F}_2(J_1)$ as follows.

Choose a large integer $m_2$ such that
$$\frac{\epsilon}{1+\epsilon}\cdot m_2\log\beta\geq\Big(n_1+\sup\{k_1:I_{n_1+k_1}(\Gamma_1)\}\Big)||f||,$$
where $||f||=\sup\Big\{|f(x)|:x\in[0,1]\Big\}.$

Write $n_2=n_1+k_1+m_2.$ Just like the first level of the Cantor set, for any $U_2,W_2\in \Sigma_{\beta_N}^{m_2}$ ending with $0^N$, applying Proposition \ref{p2} to $J_{n_2}(\Gamma_1,U_2)\times J_{n_2}(\Upsilon_1,W_2)$, we can get two full cylinders $I_{k_2}(K_2)$,~$I_{l_2}(L_2)$ such that

$$I_{k_2}(K_2)\subset B\Big(x_0,e^{-S_{n_2}f(x_2^*)}\Big),~ I_{l_2}(L_2)\subset B\Big(y_0,e^{-S_{n_2}g(y_2^*)}\Big)$$
and $$e^{-S_{n_2}f(x_2^*)}>\beta^{-k_2}>\Big(e^{-S_{n_2}f(x_2^*)}\Big)^{1+\epsilon},$$
$$ e^{-S_{n_2}g(y_2^*)}>\beta^{-l_2}>\Big(e^{-S_{n_2}g(y_2^*)}\Big)^{1+\epsilon}=e^{-S_{n_2}(1+\epsilon)g(y_2^*)},$$
where $x_2^*\in I_{n_2}(\Gamma_1,U_2),~y_2^*\in I_{n_2}(\Upsilon_1,W_2).$

Obviously, we get a subset $$I_{n_2+k_2}(\Gamma_1,U_2,K_2)\times I_{n_2+l_2}(\Upsilon_1,W_2,L_2)$$ of $$J_{n_2}(\Gamma_1,U_2)\times J_{n_2}(\Upsilon_1,W_2)$$ and $k_2\geq l_2$.

Then, the second level of the Cantor set is defined as

\begin{align*}
\mathcal{F}_2(J_1)=\Big\{I_{n_2+k_2}(\Gamma_1,U_2,K_2)\times &I_{n_2+l_2}(\Upsilon_1,W_2,L_2):U_2,W_2\in \Sigma_{\beta_N}^{m_2} ~~\text{ending with}~~ 0^N\Big\},
\end{align*}
which is composed of a collection of rectangles.

Next, we cut each rectangle into balls with the radius as the shorter sidelength of the rectangle:
\begin{align*}
I_{n_2+k_2}(\Gamma_1,U_2,K_2)\times &I_{n_2+l_2}(\Upsilon_1,W_2,L_2)\rightarrow\Big\{I_{n_2+k_2}(\Gamma_1,U_2,K_2)\\&\times I_{n_2+k_2}(\Upsilon_1,W_2,L_2,H_2):H_2\in \Sigma_{\beta_N}^{k_2-l_2}\Big\}:=\mathcal{G}_2(J_1).
\end{align*}

Therefore, the second level is defined as
$$\mathcal{F}_2=\bigcup
\limits_{J\in \mathcal{G}_1}\mathcal{F}_2(J),~\mathcal{G}_2=\bigcup\limits_{J\in \mathcal{G}_1}\mathcal{G}_2(J).$$

\subsection{From Level $(i-1)$ to Level $i$.} Assume that the $(i-1)$th level of the Cantor set $\mathcal{G}_{i-1}$ has been defined. Let
$J_{i-1}=I_{n_{i-1}+k_{i-1}}(\Gamma_{i-1})\times I_{n_{i-1}+k_{i-1}}(\Upsilon_{i-1})$ be a generic element in $\mathcal{G}_{i-1}.$  We define the
local sublevel $\mathcal{F}_i(J_{i-1})$ as follows.

Choose a large integer $m_{i}$ such that
\begin{equation}\label{espil}
\frac{\epsilon}{1+\epsilon}\cdot m_i\log\beta\geq\Big(n_{i-1}+\sup\big\{k_{i-1}:I_{n_{i-1}+k_{i-1}}(\Gamma_{i-1})\big\}\Big)||f||.
\end{equation}

Write $n_i=n_{i-1}+k_{i-1}+m_i.$ For each $U_i,W_i\in \Sigma_{\beta_N}^{m_i}$ ending with $0^N$, apply Proposition \ref{p2} to $$J_{n_i}(\Gamma_{n_{i-1}+k_{i-1}},U_{i})\times J_{n_i}(\Upsilon_{n_{i-1}+k_{i-1}},W_i),$$ we can get two full cylinders $I_{k_i}(K_i)$,~$I_{l_i}(L_i)$ such that

$$I_{k_i}(K_i)\subset B\Big(x_0,e^{-S_{n_i}f(x_i^*)}\Big), I_{l_i}(L_i)\subset B\Big(y_0,e^{-S_{n_i}g(y_i^*)}\Big)$$
and $$e^{-S_{n_i}f(x_i^*)}>\beta^{-k_i}>\Big(e^{-S_{n_i}f(x_i^*)}\Big)^{1+\epsilon},$$
$$ e^{-S_{n_i}g(y_i^*)}>\beta^{-l_i}>\Big(e^{-S_{n_i}g(y_i^*)}\Big)^{1+\epsilon}=e^{-S_{n_i}(1+\epsilon)g(y_i^*)},$$
where $x_i^*\in I_{n_i}(\Gamma_{i-1},U_{i}), ~y_i^*\in I_{n_i}(\Upsilon_{i-1},W_{i}).$

Obviously, we get a subset $$I_{n_i+k_i}(\Gamma_{i-1},U_i,K_i)\times I_{n_i+l_i}(\Upsilon_{i-1},W_i,L_i)$$ of $$J_{n_i}(\Gamma_{i-1},U_i)\times J_{n_i}(\Upsilon_{i-1},W_i)$$ and $k_i\geq l_i$.
Then, the $i$-th level of the Cantor set is defined as

\begin{align*}
\mathcal{F}_i(J_{i-1})=\Big\{I_{n_i+k_i}(\Gamma_{i-1},U_i,K_i)\times &I_{n_i+l_i}(\Upsilon_{i-1},W_i,L_i):U_i,W_i\in \Sigma_{\beta_N}^{m_i} ~~\text{ending with}~~ 0^N\Big\},
\end{align*}
which is composed of a collection of rectangles.
As before, we cut each rectangle into balls with the radius as the shorter sidelength of the rectangle:
\begin{align*}
I_{n_i+k_i}(\Gamma_{i-1},U_i,K_i)\times I_{n_i+l_i}&(\Upsilon_{i-1},W_i,L_i)\rightarrow\Big\{I_{n_i+k_i}(\Gamma_{i-1},U_i,K_i)\times\\ &I_{n_i+k_i}(\Upsilon_{i-1},W_i,L_i,H_i):H_i\in \Sigma_{\beta_N}^{k_i-l_i}\Big\}:=\mathcal{G}_i(J_{i-1}).
\end{align*}

Therefore, the $i$-th level is defined as
$$\mathcal{F}_i=\bigcup
\limits_{J\in \mathcal{G}_{i-1}}\mathcal{F}_i(J),~\mathcal{G}_i=\bigcup\limits_{J\in \mathcal{G}_{i-1}}\mathcal{G}_i(J).$$

Finally, the Cantor set is defined as
$$\mathcal{F}_\infty=\bigcap\limits_{i=1}^\infty\bigcup\limits_{J\in \mathcal{F}_i}J=\bigcap\limits_{i=1}^\infty\bigcup\limits_{I\in \mathcal{G}_i}I.$$
It is straightforward to see that  $\mathcal{F}_\infty\subset \overline{E}(T_\beta,f,g)$.

\begin{rem}\label{rem1}It should be noted that the integer $k_i$ depends upon $\Gamma_{i-1}$ and $U_i$. However,(assume that $f$ is strictly positive,
otherwise replace $f$ by $f+\epsilon$ ), since $m_i$ can be chosen such that $m_i\gg n_{i-1}$ for all $n_{i-1}$. So,
$$\beta^{-k_i}\asymp e^{-S_{n_i}f(x_i^*)}=\Big(e^{-S_{m_i}f(T_\beta^{n_{i-1}+k_{i-1}}x_i^*)}\Big)^{1+\epsilon}.$$
where $x_i^*\in I_{n_{i-1}+k_{i-1}+m_i}(\Gamma_{i-1},U_i).$
In other words, $k_i$ is almost dependent only on $U_i$ and
\begin{equation}\label{eqki}
\beta^{-k_i}\asymp e^{-S_{m_i}f(x_i')}, {x_i'}\in I_{m_i}(U_i).
\end{equation}

The same is true for $l_i$,
\begin{equation}\label{eqli}
\beta^{-l_i}\asymp e^{-S_{m_i}f(y_i')}, {y_i'}\in I_{m_i}(W_i).
\end{equation}

\end{rem}

\subsection{\bf Supporting measure }
Now we  construct a probability measure $\mu$ supported on $\mathcal{F}_\infty$, which is defined by distributing masses among the cylinders with non-empty intersection with $\mathcal{F}_\infty$. The process splits into two cases: when $s_0>1$ and $ 0\leq s_0\leq 1$.

\subsubsection*{\bf  Case I:  $s_0>1$}

In this case, for any $1<s<s_0,$ notice that
$$\frac{e^{S_nf(x')}}{e^{S_ng(y')}}\left(\frac{1}{\beta^ne^{S_nf(x')}}\right)^s\leq\left(\frac{1}{\beta^ne^{S_ng(y')}}\right)^s.$$
This means that the covering the rectangle $J_n(U)\times J_n(W)$ by balls of shorter side length preferable  and therefore, it reasonable to define the probability measure on smaller balls. To this end, let $s_i$ be the solution to the equation
$$\sum\limits_{U,W\in \Sigma_{\beta_N}^{m_i}}
\frac{e^{S_{m_i}f(x_i')}}{e^{S_{m_i}g(y_i')}}\Big(\frac{1}{\beta^{m_i}e^{S_{m_i}f(x_i')}}\Big)^s=1,$$
where $x_i'\in I_{m_i}(U_i),~ y_i'\in I_{m_i}(W_i).$

By the continuity of the pressure function $P(T_\beta,f)$ with respect to $\beta$ ~\cite[Theorem 4.1]{TanWang}, it can be shown that $s_i\rightarrow s_0$ when $m_i\rightarrow \infty.$ Thus without loss of generality, we choose that all $m_i$ are large enough such that $s_i>1$ for all $i$ and
$|s_i-s_0|=o(1).$

We systematically define the measure $\mu$ on the Cantor set by defining it on the basic cylinders first. Recall that for the level 1 of the Cantor set construction, we assumed that $n_1=m_1.$ For sub-levels of the Cantor set, roughly speaking, the role of $m_1$ and $m_k$ are to denote how many positions where the digits can be chosen (almost) freely. While $n_1$ and $n_k$ denote the length of a word in level $\F_k$ before shrinking.

\begin{itemize}

\item Let $I_{n_1+k_1}(U_1,K_1)\times I_{n_1+k_1}(W_1,L_1,H_1)$ be a generic cylinder in $\mathcal{G}_1.$ Then define
$$\mu\Big(I_{n_1+k_1}(U_1,K_1)\times I_{n_1+k_1}(W_1,L_1,H_1)\Big)=\Big(\frac{1}{\beta^{m_1}e^{S_{m_1}f(x_1')}}\Big)^{s_1},$$
where $ x_1'\in I_{m_1}(U_1)$.
\end{itemize}
Assume that the measure on the cylinders of order $(i-1)$ has been well define. To define measure on the $i$th cylinder,

\begin{itemize}
\item Let $I_{n_i+k_i}(\Gamma_{i-1},U_i,K_i)\times I_{n_i+k_i}(\Upsilon_{i-1},W_i,L_i,H_i)$ be a generic
 $i$th cylinder in $\mathcal{G}_i$. Define the probability measure $\mu$  as
\begin{align*}
\mu\Big(I_{n_i+k_i}(\Gamma_{i-1},U_i,K_i)&\times I_{n_i+k_i}(\Upsilon_{i-1},W_i,L_i,H_i)\Big)= \\&\mu\Big(I_{{n_{i-1}+k_{i-1}}}(\Gamma_{i-1})\times I_{{n_{i-1}+k_{i-1}}}(\Upsilon_{i-1})\Big)\times\Big(\frac{1}{\beta^{m_i}e^{S_{m_i}f(x_i')}}\Big)^{s_i},
\end{align*}
\end{itemize}
where $ x_i'\in I_{m_i}(U_i)$.

The measure of a rectangle in $\mathcal{F}_i$ is then given as
\begin{align*}
 &\mu\Big(I_{n_i+k_i}(\Gamma_{i-1},U_i,K_i)\times I_{n_i+l_i}(\Upsilon_{i-1},W_i,L_i)\Big)\\
 &=\mu\Big(I_{{n_{i-1}+k_{i-1}}}(\Gamma_{i-1})\times I_{{n_{i-1}+k_{i-1}}}(\Upsilon_{i-1})\Big)\times{\#\Sigma_\beta^{k_i-l_i}}\times\Big(\frac{1}{\beta^{m_i}e^{S_{m_i}f(x_i')}}\Big)^{s_i}\\
 &\asymp \mu\Big(I_{{n_{i-1}+k_{i-1}}}(\Gamma_{i-1})\times I_{{n_{i-1}+k_{i-1}}}(\Upsilon_{i-1})\Big)\times\frac{e^{S_{m_i}f(x_i')}}{e^{S_{m_i}g(y_i')}}\times\Big(\frac{1}{\beta^{m_i}e^{S_{m_i}f(x_i')}}\Big)^{s_i},
\end{align*}
where the last inequality follows from the estimates \eqref{eqki}  and \eqref{eqli}.

\subsubsection{Estimation of the $\mu$-measure of cylinders.}
For any $i\geq 1$ consider the generic cylinder,
$$I:=I_{n_i+k_i}(\Gamma_{i-1},U_i,K_i)\times I_{n_i+k_i}(\Upsilon_{i-1},W_i,L_i,H_i).$$
We would like to show by induction that, for any $1<s< s_0$,  $$\mu(I)\leq|I|^{s/(1+\epsilon)}.$$

When $i=1$. The length of $I$ is given as $$|I|=\beta^{-m_1-k_1}\geq\beta^{-m_1}\cdot\Big(e^{-S_{n_1}f(x_1^*)}\Big)^{1+\epsilon}=\beta^{-m_1}\cdot\Big(e^{-S_{m_1}f(x_1^*)}\Big)^{1+\epsilon}.$$
But, by the definition of the measure $\mu$, it is clear that
$$\mu(I)\leq|I|^{s_1}\leq|I|^{s/(1+\epsilon)}.$$

Now we consider the inductive process. Assume that
\begin{align*}
\mu(I_{n_{i-1}+k_{i-1}}(\Gamma_{i-1})\times &I_{n_{i-1}+k_{i-1}}(\Upsilon_{i-1}))
\leq|I_{n_{i-1}+k_{i-1}}(\Gamma_{i-1})\times I_{n_{i-1}+k_{i-1}}(\Upsilon_{i-1})|^{s/(1+\epsilon)}.
\end{align*}

Let $$I=I_{n_i+k_i}(\Gamma_{i-1},U_i,K_i)\times I_{n_i+k_i}(\Upsilon_{i-1},W_i,L_i,H_i)$$ be a generic cylinder in $\mathcal{G}_i$. One one hand, its length satisfies
\begin{align*}\label{f9}
|I|&=\beta^{-n_i-k_i}=|I_{n_{i-1}+k_{i-1}}(\Gamma_{i-1})\times I_{n_{i-1}+k_{i-1}}(\Upsilon_{i-1})|\times\beta^{-m_i}\times\beta^{-k_i}\nonumber\\
&\geq|I_{n_{i-1}+k_{i-1}}(\Gamma_{i-1})\times I_{n_{i-1}+k_{i-1}}(\Upsilon_{i-1})|\times\beta^{-m_i}\Big(e^{-S_{n_i}f(x_i^*)}\Big)^{1+\epsilon},
\end{align*}
where $x_i^*\in I_{n_i}(\Gamma_i,U_i).$

We compare $S_{n_i}f(x_i^*)$ and $S_{m_i}f(x_i')$ , by (\ref{espil}) we have
\begin{align*}|S_{n_i}f(x_i^*)-S_{m_i}f(x_i')|&=|S_{n_{i-1}+k_{i-1}}f(x_i^*)|\\ &\leq({n_{i-1}+k_{i-1}})\|f\| \\ &\leq \frac{\epsilon}{1+\epsilon}m_i\log\beta, \end{align*}
where  $x_i'\in I_{m_i}(U_i).$
So, we get

\begin{equation}\label{geq}
|I|\geq|I_{n_{i-1}+k_{i-1}}(\Gamma_{i-1})\times I_{n_{i-1}+k_{i-1}}(\Upsilon_{i-1})|\times\Big(\beta^{-m_i}e^{-S_{m_i}f(x_i')}\Big)^{1+\epsilon}.
\end{equation}

On the other hand, by the definition of the measure $\mu$ and the induction, we have that
\begin{align*}
\mu(I)&=\mu\Big(I_{{n_{i-1}+k_{i-1}}}(\Gamma_{i-1})\times I_{{n_{i-1}+k_{i-1}}}(\Upsilon_{i-1})\Big)\times\Big(\beta^{-m_i}e^{-S_{m_i}f(x_i')}\Big)^{s_i}\\
&\leq|I_{n_{i-1}+k_{i-1}}(\Gamma_{i-1})\times I_{n_{i-1}+k_{i-1}}(\Upsilon_{i-1})|^{s/(1+\epsilon)}
\Big((\beta^{-m_i}e^{-S_{m_i}f(x_i')})^{1+\epsilon}\Big)^{s/(1+\epsilon)}\\
&\leq |I|^{s/(1+\epsilon)}.
\end{align*}

In the following steps, for any $(x,y)\in \mathcal{F}_\infty,$ we will estimate the measure of $I_n(x)\times I_n(y)$ compared with its length $\beta^{-n}.$ By the construction of $\mathcal{F}_\infty,$ there exists $\{k_i,l_i\}_{i\geq1}$ such that for all $i\geq 1,$
$$(x,y)\in I_{n_i+k_i}(\Gamma_{i-1},U_i,K_i)\times I_{n_i+l_i}(\Upsilon_{i-1},W_i,L_i).$$

We remark that though $\{k_i,l_i\}$ are different for different cylinders composing $\mathcal{F}_\infty$ is given, once $(x,y)\in \mathcal{F}_\infty$ is given, the corresponding integers
$\{k_i,l_i\}$ are fixed.

For any $n\geq 1$, Let $i\geq1$ be the integer such that
$$n_{i-1}+k_{i-1}<n\leq n_i+k_i=n_{i-1}+k_{i-1}+m_i+k_i.$$

\noindent{\bf Step 1.} When $n_{i-1}+k_{i-1}+m_i+l_i\leq n \leq n_{i}+k_{i}=n_{i-1}+k_{i-1}+m_i+k_i.$

Then the cylinder $I_n(x)\times I_n(y)$ contains $\beta^{n_{i}+k_{i}-n}$ cylinders in $\mathcal{G}_i$ with order $n_{i}+k_{i}$. Note that by the definition of
$\{k_j,l_j\}_{1\leq j\leq i}$, the first $i$-pairs $\{k_j,l_j\}_{1\leq j\leq i}$ depends only on the first $n_i$ digits of $(x,y)$. So the measure of the sub-cylinder of order $n_{i}+k_{i}$
are the same. So, its measure of $I_n(x)\times I_n(y)$ can be estimated as
\begin{align*}
\mu\Big(I_n(x)\times I_n(y)\Big)
=\mu\Big(I_{{n_{i-1}+k_{i-1}}}(\Gamma_{i-1})&\times I_{{n_{i-1}+k_{i-1}}}(\Upsilon_{i-1})\Big) \times \Big(\beta^{-m_i}e^{-S_{m_i}f(x_i')}\Big)^{s_i}\times \beta^{n_{i}+k_{i}-n}.
\end{align*}
Thus by the measure estimation of cylinders of order $n_{i-1}+k_{i-1}$ and the choice of $k_i$, one has that
\begin{align*}
\mu\Big(I_n(x)\times I_n(y)\Big)&\leq \Big(\beta^{-n_{i-1}-k_{i-1}}\Big)^{s/(1+\epsilon)}\Big(\beta^{-m_{i}-k_{i}}\Big)^{s/(1+\epsilon)}\times\beta^{n_{i}+k_{i}-n}\\
&=\Big(\beta^{-n_{i}-k_{i}}\Big)^{s/(1+\epsilon)}\times\beta^{n_{i}+k_{i}-n}
\\ &\leq \Big(\beta^{-n}\Big)^{s/(1+\epsilon)},
\end{align*}by noting that $n \leq n_{i}+k_{i}$ and ${s/(1+\epsilon)}>1.$


\noindent{\bf Step 2.}  When $n_{i-1}+k_{i-1}+m_i\leq n \leq n_{i}+l_{i}=n_{i-1}+k_{i-1}+m_i+l_i.$

Recalling the definition of $n_{i}+k_{i}$, the first $i$-pairs $\{k_j,l_j\}_{1\leq j\leq i}$ depends only on the first $n_i$ digits of $(x,y)$. So the measure of the sub-cylinder in $\mathcal{G}_i$ with  order $n_{i}+k_{i}$
are the same. It is clear that the cylinder $I_n(x)\times I_n(y)$ contains $\beta^{k_{i}-l_i}$ cylinders of order $n_{i}+k_{i}$.
So, its measure of $I_n(x)\times I_n(y)$ can be estimated as
\begin{align*}
\mu\Big(I_n(x)\times I_n(y)\Big)
=\mu\Big(I_{{n_{i-1}+k_{i-1}}}(\Gamma_{i-1})&\times I_{{n_{i-1}+k_{i-1}}}(\Upsilon_{i-1})\Big) \times \Big(\beta^{-m_i}e^{-S_{m_i}f(x_i'^*)}\Big)^{s_i}\times \beta^{k_{i}-l_i}.
\end{align*}
Thus by the measure estimation of cylinders of order $n_{i-1}+k_{i-1}$ and the choice of $k_i$, one has that
\begin{align*}
\mu\Big(I_n(x)\times I_n(y)\Big)&\leq \Big(\beta^{-n_{i-1}-k_{i-1}}\Big)^{s/(1+\epsilon)}\Big(\beta^{-m_{i}-k_{i}}\Big)^{s/(1+\epsilon)}\times\beta^{k_{i}-l_i}\\
&=\Big(\beta^{-n_{i}-k_{i}}\Big)^{s/(1+\epsilon)}\times\beta^{k_{i}-l_i}\\
&\leq\Big(\beta^{-n_i-l_i}\Big)^{s/(1+\epsilon)}
\\ &\leq \Big(\beta^{-n}\Big)^{s/(1+\epsilon)},
\end{align*}
by noting that $n \leq n_{i}+l_{i}$ and ${s/(1+\epsilon)}>1.$

\noindent{\bf Step 3.}  When $n_{i-1}+k_{i-1}\leq n \leq n_{i-1}+k_{i-1}+m_i.$

Assume that $U_i=(\epsilon_1,\epsilon_2,\ldots,\epsilon_{m_i}), W_i=(\omega_1,\omega_2,\ldots,\omega_{m_i}).$ Denote $l=n-(n_{i-1}+k_{i-1})$ and
$h=m_i-l$. Then
\begin{align*}
&\mu\left(I_n(x)\times I_n(y)\right)\\ &=\sum\limits_{\substack{(\epsilon_{l+1},\ldots,\epsilon_{m_i})\in \Sigma_\beta^l\\ (\omega_{l+1},\ldots,\omega_{m_i})\in \Sigma_\beta^h}}
\mu\left(I_{n_i+k_i}(\Gamma_{i-1},U_i,K_i)
\times I_{n_i+k_i}(\Upsilon_{i-1},W_i,L_i,H_i)\right)\times\beta^{k_{i}-l_i}\\
&=\mu\left(I_{{n_{i-1}+k_{i-1}}}(\Gamma_{i-1})\times I_{{n_{i-1}+k_{i-1}}}(\Upsilon_{i-1})\right)\times
\sum\limits_{\substack{(\epsilon_{l+1},\ldots,\epsilon_{m_i})\in \Sigma_\beta^l \\ (\omega_{l+1},\ldots,\omega_{m_i})\in \Sigma_\beta^h}}\left(\beta^{-m_i}e^{-S_{m_i}f(x_i')}\right)^{s_i}
\times\beta^{k_{i}-l_i}\\
&=\mu\left(I_{{n_{i-1}+k_{i-1}}}(\Gamma_{i-1})\times I_{{n_{i-1}+k_{i-1}}}(\Upsilon_{i-1})\right)\times
\sum\limits_{\substack{(\epsilon_{l+1},\ldots,\epsilon_{m_i})\in \Sigma_\beta^l\\ (\omega_{l+1},\ldots,\omega_{m_i})\in \Sigma_\beta^h}}\frac{e^{S_{m_i}f(x_i')}}{e^{S_{m_i}g(y_i')}}
\left(\beta^{-m_i}e^{-S_{m_i}f(x_i')}\right)^{s_i}.
\end{align*}
Then by the estimation on the measure of cylinders of order $n_{i-1}+k_{i-1}$ and let $(\widetilde{x_i'},\widetilde{y_i'})=(T_\beta^lx_i',T_\beta^ly_i')$, we get
\begin{align*}
\mu\left(I_n(x)\times I_n(y)\right)&\leq(\beta^{-n_{i-1}-k_{i-1}})^{s/(1+\epsilon)}\cdot\frac{e^{S_{l}f(x_i')}}{e^{S_{l}g(y_i')}}\cdot
\left(\beta^{-l}e^{-S_{l}f(x_i')}\right)^{s_i}\times\\&
\quad\quad\quad\quad
\sum\limits_{\substack{(\epsilon_{l+1},\ldots,\epsilon_{m_i})\in \Sigma_\beta^l\\ (\omega_{l+1},\ldots,\omega_{m_i})\in \Sigma_\beta^h}}\frac{e^{S_{h}f(\widetilde{x_i'})}}{e^{S_{h}g(\widetilde{y_i'})}}
\cdot\left(\beta^{-h}e^{-S_{h}f(\widetilde{x_i'})}\right)^{s_i}.
\end{align*}

The first part can be estimated as
\begin{align*}
\left(\beta^{-n_{i-1}-k_{i-1}}\right)^{s/(1+\epsilon)}\cdot\frac{e^{S_{l}f(x_i')}}{e^{S_{l}g(y_i')}}\cdot
\Big(\beta^{-l}e^{-S_{l}f(x_i')}\Big)^{s_i}&\leq\Big(\beta^{-(n_{i-1}+k_{i-1}+l)}\Big)^{s/(1+\epsilon)}\\ &=\Big(\beta^{-n}\Big)^{ s/(1+\epsilon)},
\end{align*}

since $$\frac{e^{S_{l}f(x_i')}}{e^{S_{l}g(y_i')}}\cdot\Big(e^{-S_{l}f(x_i')}\Big)^{s_i}\leq 1, \text{~for~} s_i\geq1.$$

To estimate the second part, we first recall that we defined  $s_i$ to be the solution of the equation
$$\sum\limits_{U,W\in \Sigma_{\beta_N}^{m_i}}
\frac{e^{S_nf(x_i')}}{e^{S_ng(y_i')}}\Big(\frac{1}{\beta^ne^{S_nf(x_i')}}\Big)^s=1.$$
Therefore,
\begin{align*}
1=\sum\limits_{U_1,W_1\in \Sigma_{\beta_N}^{l}}
\frac{e^{S_lf(x_i')}}{e^{S_lg(y_i')}}&\Big(\frac{1}{\beta^le^{S_lf(x_i')}}\Big)^{s_i}\times
\sum\limits_{U_2,W_2\in \Sigma_{\beta_N}^{h}}
\frac{e^{S_hf(\widetilde{x_i'})}}{e^{S_hg(\widetilde{y_i'})}}\Big(\frac{1}{\beta^le^{S_hf(\widetilde{x_i'})}}\Big)^{s_i}.
\end{align*}
So, with the similar arguments as in the paper \cite[ pp. 2095-2097]{TanWang} and \cite[pp. 1331-1332]{WW}, we derive that
$$\sum\limits_{U_2,W_2\in \Sigma_{\beta_N}^{h}}
\frac{e^{S_hf(\widetilde{x_i'})}}{e^{S_hg(\widetilde{y_i'})}}
\Big(\frac{1}{\beta^le^{S_hf(\widetilde{x_i'})}}\Big)^{s_i}\leq\beta^{l\epsilon}.$$

Therefore,$$\mu\Big(I_n(x)\times I_n(y)\Big)\leq\beta^{-n\cdot s/(1+\epsilon)}\cdot\beta^{l\epsilon}\leq(\beta^{-n})^{s/(1+\epsilon)-\epsilon}.$$

As far as the measure of a general ball $B(x,r)$ with $\beta^{-n-1}\leq r<\beta^{-n}$ is concerned, we notice that it can intersect at most $3$ cylinders of order $n$. Thus,$$\mu\Big(B(x,r)\Big)\leq3(\beta^{-n})^{s/(1+\epsilon)-\epsilon}\leq3\beta^sr^{s/(1+\epsilon)-\epsilon}\leq3\beta^2r^{s/(1+\epsilon)-\epsilon}.$$

So, finally, an application of the mass distribution principle (Proposition \ref{mdp}) yields that
$$\hdim \overline{E}(T_\beta, f,g)\geq s_0.$$

\subsubsection*{\bf Case II: $0\leq s_0\leq1$}

The arguments are similar to Case I but the calculations are different. In this case, for any $s<s_0\leq 1,$ it is trivial that
$$\frac{e^{S_nf(x')}}{e^{S_ng(y')}}\Big(\frac{1}{\beta^ne^{S_nf(x')}}\Big)^s\geq\big(\frac{1}{\beta^ne^{S_ng(y')}}\big)^s.$$
This means that the covering of the rectangle $J_n(U)\times J_n(W)$ by balls of larger side length is more preferable  and therefore, it reasonable to define the probability measure  of the rectangle to be the same
measure for the cylinder of order $n_i+l_i$.

Just like Case I, let $s_i$ be the solution to the equation
$$\sum\limits_{U,W\in \Sigma_{\beta_N}^{m_i}~~\text{ending with}~ 0^N}
\Big(\frac{1}{\beta^{m_i}e^{S_{m_i}g(y_i')}}\Big)^s=1,$$
where $y_i'\in I_{m_i}(W_i).$
By the continuity of the pressure function $P(T_\beta,f)$ with respect to $\beta$ we can assume that for all $m_i$  large enough we have that $s_i<1$ for all $i$ and
$|s_i-s_0|=o(1).$

We first define the measure $\mu$ on the basic cylinders.
\begin{itemize}
\item  Let $I_{n_1+k_1}(U_1,K_1)\times I_{n_1+l_1}(W_1,L_1)$ be a generic cylinder in $\mathcal{F}_1.$ Then define
$$\mu\Big(I_{n_1+k_1}(U_1,K_1)\times I_{n_1+l_1}(W_1,L_1)\Big)=\Big(\frac{1}{\beta^{m_1}e^{S_{m_1}g(y_1')}}\Big)^{s_1},$$
where $ y_1'\in I_{m_1}(W_1)$.
\end{itemize}

\begin{itemize}
\item Then the measure of it is evenly distributed on its sub-cylinders in $\mathcal{G}_1$. So, for a generic cylinder
$I_{n_1+k_1}(U_1,K_1)\times I_{n_1+k_1}(W_1,L_1,H_1)$ in $\mathcal{G}_1$, define

\begin{align*}
\mu\Big(I_{n_1+k_1}(U_1,K_1)\times I_{n_1+k_1}(W_1,L_1,H_1)\Big)&=\frac{1}{\#\Sigma_\beta^{k_1-l_1}}\Big(\frac{1}{\beta^{m_1}e^{S_{m_1}g(y_1')}}\Big)^{s_1}\\
&\asymp\frac{1}{\beta^{k_1-l_1}}\Big(\frac{1}{\beta^{m_1}e^{S_{m_1}g(y_1')}}\Big)^{s_1}.
\end{align*}

\end{itemize}

Assume that the measure on the cylinders of order $(i-1)$ has been well defined. Then to define the measure on the $i$th cylinder we proceed as follows.
\begin{itemize}
\item  Let $I_{n_i+k_i}(\Gamma_{i-1},U_i,K_i)\times I_{n_i+l_i}(\Upsilon_{i-1},W_i,L_i)$ be a generic
cylinder in $\mathcal{F}_i$.

Then define
\begin{align*}
\mu\Big(I_{n_i+k_i}(\Gamma_{i-1},U_i,K_i)\times I_{n_i+l_i}(\Upsilon_{i-1},&W_i,L_i)\Big)
= \mu\Big(I_{{n_{i-1}+k_{i-1}}}(\Gamma_{i-1})\\&\times I_{{n_{i-1}+k_{i-1}}}(\Upsilon_{i-1})\Big)\times\Big(\frac{1}{\beta^{m_i}e^{S_{m_i}g(y_i')}}\Big)^{s_i},
\end{align*}
where $ y_i'\in I_{m_i}(W_i)$.
\end{itemize}

\begin{itemize}
\item By the definition of $k_i,l_i$, the measure of a cylinder in $\mathcal{G}_i$ is then given as
\begin{align*}
 &\mu\Big(I_{n_i+k_i}(\Gamma_{i-1},U_i,K_i)\times I_{n_i+k_i}(\Upsilon_{i-1},W_i,L_i,H_i)\Big)\\
 &=\mu\Big(I_{{n_{i-1}+k_{i-1}}}(\Gamma_{i-1})\times I_{{n_{i-1}+k_{i-1}}}(\Upsilon_{i-1})\Big)\times\frac{1}{\#\Sigma_\beta^{k_i-l_i}}\times\Big(\frac{1}{\beta^{m_i}e^{S_{m_i}g(y_i')}}\Big)^{s_i}\\
 &\asymp \mu\Big(I_{{n_{i-1}+k_{i-1}}}(\Gamma_{i-1})\times I_{{n_{i-1}+k_{i-1}}}(\Upsilon_{i-1})\Big)\times\frac{e^{S_{m_i}g(y_i')}}{e^{S_{m_i}f(x_i')}}\times\Big(\frac{1}{\beta^{m_i}e^{S_{m_i}g(y_i')}}\Big)^{s_i}.
\end{align*}
\end{itemize}
\subsubsection{Estimation of the $\mu$-measure of cylinders.}
We first show by induction that for any $i\geq 1$ and a generic cylinder
$$I:=I_{{n_{i-1}+k_{i-1}}+m_{i}+k_i}(\Gamma_{i-1},U_i,K_i)\times I_{{n_{i-1}+k_{i-1}}+m_{i}+k_i}(\Upsilon_{i-1},W_i,L_i,H_i),$$
we have  $$\mu(I)\leq|I|^{s/(1+\epsilon)}.$$

When $i=1$. On the one hand, the length of $I$ is given as $$|I|=\beta^{-m_1-k_1}\geq\beta^{-m_1}\cdot\Big(e^{-S_{n_1}f(x_1')}\Big)^{1+\epsilon}=\beta^{-m_1}\cdot\Big(e^{-S_{m_1}f(x_1')}\Big)^{1+\epsilon}.$$
But on the other hand, by the definition of the measure $\mu$, it is clear that
\begin{align*}
\mu(I)&\leq\frac{e^{S_{m_1}g(y_1')}}{e^{S_{m_1}f(x_1')}}\cdot\Big(\frac{1}{\beta^{m_1}e^{S_{m_1}g(y_1')}}\Big)^{s_1}\\
&\leq\Big(\beta^{-m_1}e^{-S_{m_1}f(x_1')}\Big)^{s_1}\\
&\leq|I|^{s/(1+\epsilon)},
\end{align*}
by noting that $s_1<1.$

Just like Case I, we consider the inductive process. Assume that
\begin{align*}
\mu(I_{n_{i-1}+k_{i-1}}(\Gamma_{i-1})\times &I_{n_{i-1}+k_{i-1}}(\Upsilon_{i-1}))\leq|I_{n_{i-1}+k_{i-1}}(\Gamma_{i-1})\times I_{n_{i-1}+k_{i-1}}(\Upsilon_{i-1})|^{s/(1+\epsilon)}.
\end{align*}
 Let $$I=I_{n_i+k_i}(\Gamma_{i-1},U_i,K_i)\times I_{n_i+k_i}(\Upsilon_{i-1},W_i,L_i,H_i)$$ be a generic cylinder in $\mathcal{G}_i$.
 By (\ref{geq}) we get
$$|I|\geq|I_{n_{i-1}+k_{i-1}}(\Gamma_{i-1})\times I_{n_{i-1}+k_{i-1}}(\Upsilon_{i-1})|\times\Big(\beta^{-m_i}e^{-S_{m_i}f(x_i')}\Big)^{1+\epsilon}.$$

From the definition of the measure $\mu$, the induction and that $s_i<1$, it follows that
\begin{align*}
\mu(I)&=\mu\Big(I_{{n_{i-1}+k_{i-1}}}(\Gamma_{i-1})\times I_{{n_{i-1}+k_{i-1}}}(\Upsilon_{i-1})\Big)\times\frac{e^{S_{m_i}g(y_i')}}{e^{S_{m_i}f(x_i')}}
\times\Big(\frac{1}{\beta^{m_i}e^{S_{m_i}g(y_i')}}\Big)^{s_i}\\
&\leq|I_{n_{i-1}+k_{i-1}}(\Gamma_{i-1})\times I_{n_{i-1}+k_{i-1}}(\Upsilon_{i-1})|^{s/(1+\epsilon)}
\Big((\beta^{-m_i}e^{-S_{m_i}f(x_i')})^{1+\epsilon}\Big)^{s/(1+\epsilon)}\\
&\leq |I|^{s/(1+\epsilon)}\\ &=\Big(\beta^{-n_i-k_i}\Big)^{s/(1+\epsilon)}\\ &\asymp\Big(\beta^{-m_i-k_i}\Big)^{s/(1+\epsilon)}.
\end{align*}

So, for a rectangle $$J=I_{n_i+k_i}(\Gamma_{i-1},U_i,K_i)\times I_{n_i+l_i}(\Upsilon_{i-1},W_i,L_i)$$
in $\mathcal{F}_i,$ we have that
\begin{align*}
\mu(J)&=\mu\Big(I_{{n_{i-1}+k_{i-1}}}(\Gamma_{i-1})\times I_{{n_{i-1}+k_{i-1}}}(\Upsilon_{i-1})\Big)\times\Big(\frac{1}{\beta^{m_i}e^{S_{m_i}g(y_i')}}\Big)^{s_i}\\
&\leq \Big(\beta^{-n_{i-1}-k_{i-1}}\Big)^{s/(1+\epsilon)}\Big(\beta^{-m_i}\beta^{-l_i}\Big)^{s_i}\\
&\leq\Big(\beta^{-n_{i}-l_{i}}\Big)^{s/(1+\epsilon)}.
\end{align*}

For any $(x,y)\in \mathcal{F}_\infty,$ we will estimate the measure of $I_n(x)\times I_n(y)$ compared with its length $\beta^{-n}.$ By the construction of $\mathcal{F}_\infty,$ there exists $\{k_i,l_i\}_{i\geq1}$ such that for all $i\geq 1,$
$$(x,y)\in I_{n_i+k_i}(\Gamma_{i-1},U_i,K_i)\times I_{n_i+l_i}(\Upsilon_{i-1},W_i,L_i,).$$


For any $n\geq 1$, let $i\geq1$ be the integer such that
$$n_{i-1}+k_{i-1}<n\leq n_i+k_i=n_{i-1}+k_{i-1}+m_i+k_i.$$

\noindent{\bf Step I.} When $n_{i-1}+k_{i-1}+m_i+l_i\leq n \leq n_{i}+k_{i}=n_{i-1}+k_{i-1}+m_i+k_i.$

In this case, the cylinder can intersect only one rectangle in $\mathcal{F}_i,$ so
\begin{align*}
\mu\Big(I_n(x)\times I_n(y)\Big)
&=\mu\Big(I_{{n_{i}+k_{i}}}(\Gamma_{i-1},U_i,K_i)\times I_{{n_{i}+k_{i}}}(\Upsilon_{i-1},W_i,L_i)\Big)\\
&\leq\Big(\beta^{-n_{i}-l_{i}}\Big)^{s/(1+\epsilon)}
\\ &\leq \Big(\beta^{-n}\Big)^{s/(1+\epsilon)}.
\end{align*}

\noindent{\bf Step II.}  When $n_{i-1}+k_{i-1}+m_i\leq n \leq n_{i}+l_{i}=n_{i-1}+k_{i-1}+m_i+l_i.$

Then the cylinder $I_n(x)\times I_n(y)$ contains $\beta^{n_{i}+l_{i}-n}$ cylinders in $\mathcal{F}_i$ with order $n_{i}+l_{i}$. Note that by the definition of
$\{k_j,l_j\}_{1\leq j\leq i}$, the first $i$-pairs $\{k_j,l_j\}_{1\leq j\leq i}$ depends only on the first $n_i$ digits of $(x,y)$. So the measure of the sub-cylinder of order $n_{i}+k_{i}$
are the same. So, its measure of $I_n(x)\times I_n(y)$ can be estimated as
\begin{align*}
\mu\Big(I_n(x)\times I_n(y)\Big)
&=\mu\Big(I_{{n_{i}+k_{i}}}(\Gamma_{i-1},U_i,K_i)\times I_{{n_{i}+k_{i}}}(\Upsilon_{i-1},W_i,L_i)\Big)\times \frac{1}{\beta^{n-n_{i}-l_{i}}}\\
&\leq \Big(\beta^{-n_{i}-l_{i}}\Big)^{s/(1+\epsilon)}\times \frac{1}{\beta^{n-n_{i}-l_{i}}}
\\ &\leq\Big(\beta^{-n}\Big)^{s/(1+\epsilon)}.
\end{align*}

\noindent{\bf Step III.}  When $n_{i-1}+k_{i-1}\leq n \leq n_{i-1}+k_{i-1}+m_i.$

Assume $U_i=(\epsilon_1,\epsilon_2,\ldots,\epsilon_{m_i}), W_i=(\omega_1,\omega_2,\ldots,\omega_{m_i}).$ Write $l=n-(n_{i-1}+k_{i-1})$ and
$h=m_i-l$. Then
\begin{align*}
&\mu\Big(I_n(x)\times I_n(y)\Big)\\ &=\sum\limits_{\substack{ (\epsilon_{l+1},\ldots,\epsilon_{m_i})\in \Sigma_\beta^l\\ (\omega_{l+1},\ldots,\omega_{m_i})\in \Sigma_\beta^h}}
\mu\Big(I_{n_i+k_i}(\Gamma_{i-1},U_i,K_i)
\times I_{n_i+l_i}(\Upsilon_{i-1},W_i,L_i)\Big)\\
&=\mu\Big(I_{{n_{i-1}+k_{i-1}}}(\Gamma_{i-1})\times I_{{n_{i-1}+k_{i-1}}}(\Upsilon_{i-1})\Big)\times
\sum\limits_{\substack{(\epsilon_{l+1},\ldots,\epsilon_{m_i})\in \Sigma_\beta^l\\ (\omega_{l+1},\ldots,\omega_{m_i})\in \Sigma_\beta^h}}
\Big(\beta^{-m_i}e^{-S_{m_i}g(y_i')}\Big)^{s_i}.
\end{align*}
Then by the estimation on the measure of cylinders of order $n_{i-1}+k_{i-1}$ and let $\widetilde{y_i'}=T_\beta^ly_i'$, we get
\begin{align*}
\mu\Big(I_n(x)\times I_n(y)\Big)&\leq\Big(\beta^{-n_{i-1}-k_{i-1}}\Big)^{s/(1+\epsilon)}\cdot\Big(\beta^{-l}e^{-S_{l}g(y_i')}\Big)^{s_i}\times
\sum\limits_{\substack{(\epsilon_{l+1},\ldots,\epsilon_{m_i})\in \Sigma_\beta^l\\ (\omega_{l+1},\ldots,\omega_{m_i})\in \Sigma_\beta^h}}\Big(\beta^{-h}e^{-S_{h}g(\widetilde{y_i'})}\Big)^{s_i}\\
&\leq\Big(\beta^{-n}\Big)^{s/(1+\epsilon)}\cdot\sum\limits_{\substack{(\epsilon_{l+1},\ldots,\epsilon_{m_i})\in \Sigma_\beta^l\\ (\omega_{l+1},\ldots,\omega_{m_i})\in \Sigma_\beta^h}}
\Big(\beta^{-h}e^{-S_{h}g(\widetilde{y_i'})}\Big)^{s_i}.
\end{align*}

Recall the definition of $s_i:$
$$\sum\limits_{U,W\in \Sigma_{\beta_N}^{m_i}}
\Big(\frac{1}{\beta^{m_i}e^{S_{m_i}g(y_i')}}\Big)^s=1.$$
Then
\begin{align*}
1=\sum\limits_{U_1,W_1\in \Sigma_{\beta_N}^{l}}
\Big(\frac{1}{\beta^le^{S_lg(y_1')}}\Big)^{s_i}\cdot
\sum\limits_{U_2,W_2\in \Sigma_{\beta_N}^{h}}
\Big(\frac{1}{\beta^le^{S_hg(\widetilde{y_1'})}}\Big)^{s_i},
\end{align*}
where $y_1'^*\in I_{l}(U_1),\widetilde{y_1'}\in I_{h}(W_2).$\\
So, with the similar argument as in the previous section, we have that
$$\sum\limits_{U_2,W_2\in \Sigma_{\beta_N}^{l} }
\Big(\frac{1}{\beta^he^{S_hg(\widetilde{y_h'})}}\Big)^{s_i}\leq\beta^{l\epsilon}.$$

Therefore,$$\mu\Big(I_n(x)\times I_n(y)\Big)\leq\Big(\beta^{-n}\Big)^ {s/(1+\epsilon)}\cdot\beta^{l\epsilon}\leq\Big(\beta^{-n}\Big)^{s/(1+\epsilon)-\epsilon}.$$

Notice that a general ball $B(x,r)$ with $\beta^{-n-1}\leq r<\beta^{-n}$ can intersect at most $3$ cylinders of order $n$. Therefore the measure of the general ball can be estimated as,
$$\mu\Big(B(x,r)\Big)\leq3\Big(\beta^{-n}\Big)^{s/(1+\epsilon)-\epsilon}\leq3\beta^sr^{s/(1+\epsilon)-\epsilon}\leq3\beta^2r^{s/(1+\epsilon)-\epsilon}.$$

So, finally, by using the mass distribution principle we have the lower bound of the Hausdorff dimension of this case,
$$\hdim \overline{E}(T_\beta, f,g)\geq s_0.$$
Hence combining both {cases}, we have the desired conclusion.\\

\noindent{\bf Acknowledgments.} We would like to thank Professor Baowei Wang for useful discussions on this project. The first-named author was supported by  the ARCDP200100994. {  The Second author was supported by Natural Science Research Project of West Anhui University (No. WGKQ2021020) and Provincial Natural Science Research Project of Anhui Colleges (No. KJ2021A0950).}


\end{document}